\theoremstyle{definition}
\newtheorem{definition}{Definition}[section]
\newtheorem{example}[definition]{Example}
\newtheorem{problem}[definition]{Open Problem}
\theoremstyle{plain}
\newtheorem{proposition}[definition]{Proposition}
\newtheorem{lemma}[definition]{Lemma}
\newtheorem{theorem}[definition]{Theorem}
\newtheorem{corollary}[definition]{Corollary}
\def\defterm#1{\textit{#1}}
\def\nset{\mathbb{N}}
\def\adjone#1{{#1}^{{\tt 1}}}
\newcommand{\pres}[3][]{#1\langle #2\:#1|\allowbreak\:#3 #1\rangle}
\newcommand{\gpres}[3][]{\mathrm{Gp}#1\langle #2\:#1|\allowbreak\:#3 #1\rangle}
\newcommand{\sgpres}[3][]{\mathrm{Sg}#1\langle #2\:#1|\allowbreak\:#3 #1\rangle}
\newcommand{\sgmpres}[3][]{\mathrm{SgM}#1\langle #2\:#1|\allowbreak\:#3 #1\rangle}
\def\BSG{\mathrm{G}}
\def\BSS{\mathrm{S}}
\newcommand{\malcgen}[1]{#1^{{\rm M}}}
\newcommand{\rev}{\mathrm{rev}}
\DeclareMathOperator{\cotan}{cotan}
\def\elt#1{\overline{#1}}
\def\rel#1{\mathcal{#1}}
\let\emptyword=\varepsilon
\def\fsa#1{\mathcal{#1}}
\begin{document}

\title{Automatic structures for subsemigroups of Baumslag--Solitar semigroups} 
\author{Alan J. Cain}
\date{}

\maketitle

\address{%
School of Mathematics \& Statistics, University of St Andrews\\ North
Haugh, St Andrews, Fife KY16 9SS, United
Kingdom\footnote{\textit{Current address:} Centro de Matem\'{a}tica,
  Universidade do Porto, Rua do Campo Alegre 687, 4169--007 Porto,
  Portugal.}
}
\email{%
ajcain@fc.up.pt
}
\webpage{%
www.fc.up.pt/pessoas/ajcain/
}


\begin{abstract}
This paper studies automatic structures for subsemigroups of
Baumslag--Solitar semigroups (that is, semigroups presented by
$\pres{x,y}{(yx^m,x^ny)}$ where $m,n \in \nset$). A geometric argument
(a rarity in the field of automatic semigroups) is used to show that
if $m > n$, all of the finitely generated subsemigroups of this
semigroup are [right-] automatic. If $m < n$, all of its finitely
generated subsemigroups are left-au\-to\-mat\-ic. If $m=n$, there exist
finitely generated subsemigroups that are not automatic. An appendix
discusses the implications of these results for the theory of Malcev
presentations. (A Malcev presentation is a special type of
presentation for semigroups embeddable into groups.)
\end{abstract}

\section{Introduction}

Baumslag--Solitar groups, which have presentations of the form
\begin{equation}
\label{eq:bsg1}
\gpres{x,y}{(yx^m,x^ny)},
\end{equation}
where $m,n \in \nset$, were introduced to answer certain questions on
Hopficity \cite{ar:baumslag1962a}. They have proved to be of
independent interest, as have the semigroups with the same
presentation
\begin{equation}
\label{eq:bss1}
\sgpres{x,y}{(yx^m,x^ny)}.
\end{equation}
Denote the Baumslag--Solitar group \eqref{eq:bsg1} by $\BSG(m,n)$ and
the Baumslag--Solitar semigroup \eqref{eq:bss1} by $\BSS(m,n)$. As a
consequence of Adyan's Theorem \cite{adjan_embeddability}, the semigroup
$\BSS(m,n)$ embeds into the group $\BSG(m,n)$ for any $m,n \in \nset$.

The present paper studies automatic structures for finitely generated
subsemigroups of Baumslag--Solitar semigroups and the results obtained
confirm Jackson's \cite{ar:jackson2002a} observation of the radical
differences between Baumslag--Solitar groups and semigroups. The
concept of an automatic structure has been extended from groups
\cite{epstein_wordproc} to semigroups \cite{campbell_autsg}. While
automatic structures for groups have an elegant geometric
characterization known as the `fellow traveller' property, this
characterization does not extend to automatic structures for general
semigroups. Although two different geometric characterizations have
been established for automatic structures for general semigroups
\cite{hoffmann_geometric} and for a broad class that includes
the right-cancellative semigroups \cite{silva_geometric}, these are neither as
elegant nor as easy to use as the fellow traveller property. Thus, while
geometric reasoning plays a vital role in the theory of automatic
groups, the theory of automatic semigroups exhibits a paucity of such
arguments, tending instead towards syntactic analyses.

However, automatic structures for group-em\-bed\-dable semigroups do
inherit many of the pleasant geometric properties of automatic groups
(see \cite[Section~3]{crr_ssgofg} or
\cite[Section~2.3]{c_phdthesis}). Thus --- for example --- relatively
basic geometric arguments have been used successfully by Robertson,
Ru\v{s}kuc, and the present author to prove that all finitely
generated subsemigroups of virtually free groups are automatic
\cite[Theorem~4]{crr_ssgofg}. In the present paper, rather more
sophisticated geometric reasoning is deployed to prove that all
finitely generated subsemigroups of the Baumslag--Solitar semigroup
$\BSS(m,n)$ are automatic when $m > n$
(Theorem~\ref{thm:one}).

The result that finitely generated subsemigroups of $\BSS(m,n)$ are
left-au\-to\-mat\-ic when $m < n$ (Theorem~\ref{thm:two}) follows as a
corollary. (See Definition~\ref{def:autstruct} for the meaning of
left-au\-to\-mat\-ic.)

Hoffmann \cite[Corollary~4.20]{hoffmann_phd} used syntactic
reasoning to show that the Baumslag--Solitar semigroup $\BSS(m,n)$ is
automatic if $m > n$ and left-au\-to\-mat\-ic if $m < n$; these results follow as special cases of Theorems~\ref{thm:one} and~\ref{thm:two}.

However, as is shown in Section~\ref{sec:bssmmcontnonaut}, the
Baumslag--Solitar semigroup $\BSS(m,m)$ contains finitely generated
subsemigroups that are neither right- nor left-au\-to\-mat\-ic. This is in
some sense surprising --- one would expect that the structure of a
Baumslag--Solitar semigroup $\BSS(m,n)$ is more complex when $m \neq
n$ and simpler when $m = n$. Indeed, the Baumslag--Solitar group
$\BSG(m,n)$ is automatic if and only if $m=n$
\cite[Example~7.4.1]{epstein_wordproc}.

Malcev presentations are a special type of semigroup presentation for
semigroups embeddable into groups. Every automatic or left-au\-to\-mat\-ic
group-em\-bed\-dable semigroup admits a finite Malcev
presentation~\cite[Theorem~2]{crr_ssgofg}. The results of this paper
therefore have implications for the theory of Malcev presentations. An
appendix in Section~\ref{sec:malcev} discusses these implications and
poses some questions for further research.

[This paper is based on Chapter~8 of the author's Ph.D. thesis
  \cite{c_phdthesis}.]

\section{Preliminaries}

\subsection{Words, prefixes, and suffixes}
Following \cite{epstein_wordproc}, the notation used in this paper distinguishes a word from the element of the semigroup or group it represents. Let $A$ be an alphabet representing a set of generators for a semigroup $S$. For any word $w \in A^+$, denote by $\elt{w}$ the element of $S$ represented by $w$. Similarly, if $A$ represents a set of generators for a group $G$, let $\elt{w}$ be the element of $G$ represented by $w \in (A \cup A^{-1})^*$. In both cases, for any set of words $W$, $\elt{W}$ is the set of all elements represented by at least one word in $W$.

Denote the identity of $A^*$ --- the empty word --- by $\emptyword$. Denote the length of $u \in A^*$ by $|u|$.

Let $u = u_1\cdots u_n$, where $u_i \in A$. For $t \in \nset \cup \{0\}$, let
\[
u(t) = \begin{cases}
\emptyword &\text{if }t = 0,\\
u_1\cdots u_t &\text{if }0 < t \leq n,\\
u_1\cdots u_n &\text{if }n < t,
\end{cases}
\]
and let
\[
u[t] = \begin{cases}
u_{t+1}\cdots u_n &\text{if }0 \leq t < n,\\
\emptyword &\text{if }n \leq t.
\end{cases}
\]
So $u(t)$ is the prefix of $u$ up to and including the $t$-th letter; $u[t]$ is the suffix of $u$ after and not including the $t$-th letter. Observe that for all $t \in \nset \cup \{0\}$, $u = u(t)u[t]$, and that if one formally assumes that $u_t = \emptyword$ for $t > n$, then $u(t+1) = u(t)u_{t+1}$ and $u[t] = u_{t+1}u[t+1]$.

\subsection{Cayley graphs}
\begin{definition}
Let $S$ be a semigroup and $A$ an alphabet representing a set of generators for $S$. The \defterm{Cayley graph\/} $\Gamma(S,A)$ of $S$ with respect to $A$ is the directed graph with vertex set $S$ and, for each pair of vertices $s,t$, an edge from $s$ to $t$ labelled by $a \in A$ if and only if $s\elt{a} = t$.
\end{definition}

Let $S$ be a semigroup that embeds in a group. If $S$ is not already a monoid, then a new vertex representing a two-sided identity can be adjoined to $\Gamma(S,A)$. Throughout this paper, assume that $\Gamma(S,A)$ has a vertex representing a two-sided identity as a basepoint.

Given a word $u \in A^+$, let $\path{u}$ be the walk in $\Gamma(S,A)$ starting at the basepoint and labelled by $u$. The walk $\path{u}$ ends at the vertex $\elt{u}$.

\begin{definition}
Let $s,t$ be two vertices in $\Gamma(S,A)$. The [undirected] distance
from $s$ to $t$, denoted $d(s,t)$, is defined to be the infimum of the
lengths of the undirected paths from $s$ to $t$. [The assumption of
  the presence of a basepoint has the side-effect of ensuring at least
  one such undirected path exists.] 
\end{definition}

\subsection{Automatic semigroups}

This section contains the definitions and results for automatic
semigroups that are required in the remainder of the paper.

\begin{definition}
Let $\$$ be a new symbol not in $A$. Let $A(2,\$) = \{(a,b) : a,b \in A \cup \{\$\}\} - \{(\$,\$)\}$ be a new alphabet. Define the mapping $\delta_A : A^+ \times A^+ \to A(2,\$)^+$ by
\[
(u_1\cdots u_m,v_1\cdots v_n) \mapsto
\begin{cases}
(u_1,v_1)\cdots(u_m,v_n) & \text{if }m=n,\\
(u_1,v_1)\cdots(u_n,v_n)(u_{n+1},\$)\cdots(u_m,\$) & \text{if }m>n,\\
(u_1,v_1)\cdots(u_m,v_m)(\$,v_{m+1})\cdots(\$,v_n) & \text{if }m<n,
\end{cases}
\]
where $u_i,v_i \in A$. The symbol $\$$ is usually called the \defterm{padding symbol}.
\end{definition}

\begin{definition}
\label{def:autstruct}
A \defterm{right-au\-to\-mat\-ic structure} for $S$ is a pair $(A,L)$, where $A$ is a finite alphabet representing a set of generators for $S$ and $L \subseteq A^+$ is a regular language with $\elt{L} = S$ and such that, for each $a \in A \cup \{\emptyword\}$, 
\[
L_a = \{(u,v) : u,v \in L, \elt{ua} = \elt{v}\}\delta_A
\]
is a regular language over $A(2,\$)$. A \defterm{right-au\-to\-mat\-ic semigroup} is a semigroup that admits a right-au\-to\-mat\-ic structure.

A \defterm{left-au\-to\-mat\-ic} structure for $S$ is a pair $(A,L)$, where $A$ is a finite alphabet representing a set of generators for $S$ and $L \subseteq A^+$ is a regular language with $\elt{L} = S$ and such that, for each $a \in A \cup \{\emptyword\}$, 
\[
{}_aL = \{(u,v) : u,v \in L, \elt{au} = \elt{v}\}\delta_A
\]
is a regular language over $A(2,\$)$. A \defterm{left-au\-to\-mat\-ic semigroup} is a semigroup that admits a left-au\-to\-mat\-ic structure.

By default, `automatic' means `right-au\-to\-mat\-ic'.
\end{definition}

[Hoffmann \& Thomas \cite{hoffmann_notions} also considered the
  possibility of having the padding symbols $\$$ placed on the left rather than
  on the right. However, every cancellative semigroup that is
  right-au\-to\-mat\-ic (respectively, left-au\-to\-mat\-ic) with padding on the
  right is right-au\-to\-mat\-ic (respectively, left-au\-to\-mat\-ic) with padding
  on the left, and vice versa \cite[Remark~8.3]{hoffmann_notions}. As
  this paper is concerned only with group-em\-bed\-dable semigroups, which
  are of course cancellative, it suffices to consider only padding on
  the right.]

\begin{proposition}[{\cite[Proposition~3.5]{campbell_autsg}}]
\label{prop:adjoinid}
If a semigroup $S$ is automatic, then so is $\adjone{S}$, the semigroup formed by adjoining an identity to $S$.
\end{proposition}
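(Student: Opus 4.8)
The plan is to extend the given automatic structure for $S$ by a single new letter naming the adjoined identity. So suppose $(A,L)$ is a right-au\-to\-mat\-ic structure for $S$. We may assume $S$ is not already a monoid (otherwise $\adjone{S}=S$ and there is nothing to prove), so the adjoined identity $1$ is a genuinely new element with $1\neq\elt{w}$ for every $w\in A^+$. Introduce a new symbol $e$ with $\elt{e}=1$, and set $B=A\cup\{e\}$ and $M=L\cup\{e\}$. Since $L$ is regular and $\{e\}$ is finite, $M$ is regular, and $\elt{M}=\elt{L}\cup\{1\}=\adjone{S}$. Thus $(B,M)$ is a candidate right-au\-to\-mat\-ic structure over the finite alphabet $B$, and it remains only to show that $M_b$ is regular for each $b\in B\cup\{\emptyword\}$.

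The verification splits according to whether each of $u,v$ lies in $L$ or equals $e$, and because $1$ is never represented by a word of $A^+$, most of the resulting fragments collapse to the empty set. For $b=\emptyword$ one finds $M_\emptyword=L_\emptyword'\cup\{(e,e)\delta_B\}$, where $L_\emptyword'$ denotes $L_\emptyword$ re-padded over $B(2,\$)$; for $b=e$ one has $\elt{ue}=\elt{u}$, so $M_e=M_\emptyword$; and for $b=a\in A$ the only surviving fragments are the pairs with $u,v\in L$ (giving $L_a$ re-padded over $B$) together with the pairs $(e,v)$ for which $\elt{ea}=\elt{a}=\elt{v}$. Re-padding a regular language from $A(2,\$)$ to $B(2,\$)$ preserves regularity, so the whole problem reduces to a single point: the regularity of $K_a:=\{v\in L:\elt{v}=\elt{a}\}$, since then $\{(e,v):v\in K_a\}\delta_B$ is regular (prepending the fixed letter $e$ and padding is a regularity-preserving operation).

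That point is the main obstacle. To prove $K_a$ regular I would first establish the general fact that for each $s\in S$ the set of $L$-representatives of $s$ is regular. Since $\elt{L}=S$, fix a word $u_0\in L$ with $\elt{u_0}=s$; then $\{v\in L:\elt{v}=s\}$ is exactly the projection onto the second coordinate of $L_\emptyword\cap\bigl(\{u_0\}\times A^+\bigr)\delta_A$. Here $L_\emptyword$ is regular by hypothesis, the relation $\bigl(\{u_0\}\times A^+\bigr)\delta_A$ of padded pairs with fixed first component $u_0$ is regular, their intersection is regular, and the coordinate projection of a padded-automaton-recognizable relation is again regular. Taking $s=\elt{a}$ yields $K_a$, completing the argument. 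I expect this projection lemma to be the only non-routine ingredient; the remaining case analysis is bookkeeping that simply exploits $1\notin S$.
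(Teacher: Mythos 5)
The paper offers no internal proof of this proposition: it is quoted, with citation, from Campbell, Robertson, Ru\v{s}kuc \& Thomas, so there is nothing in the paper itself to compare your argument against. Your proof is correct, and it is essentially the standard argument for this fact: adjoin a letter $e$ representing the new identity, take $M = L \cup \{e\}$, and use $1 \notin S$ (so no word of $A^+$ represents $1$) to kill all mixed pairs, leaving $M_\emptyword = M_e = L_\emptyword \cup \{(e,e)\}$ and $M_a = L_a \cup \{(e,v) : v \in K_a\}\delta_B$ for $a \in A$. Your reduction of the only non-routine point --- regularity of $K_a = \{v \in L : \elt{v} = \elt{a}\}$ --- is sound: fixing $u_0 \in L$ with $\elt{u_0} = \elt{a}$, the relation $\bigl(\{u_0\} \times A^+\bigr)\delta_A$ is recognized by an automaton that checks the first coordinates spell $u_0$ (then $\$$), intersection preserves regularity, and projection onto the second coordinate is the image under the monoid homomorphism $A(2,\$)^* \to A^*$ sending $(a,b) \mapsto b$, $(\$,b) \mapsto b$, $(a,\$) \mapsto \emptyword$, hence regular; these are all standard closure properties used throughout the automatic-semigroups literature. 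Two cosmetic remarks: no ``re-padding'' is actually needed, since $A(2,\$) \subseteq B(2,\$)$ and $\delta_A$, $\delta_B$ agree on $A^+ \times A^+$, so $L_\emptyword$ and $L_a$ are already regular languages over $B(2,\$)$; and your reduction to the non-monoid case is harmless but unnecessary, since the argument only uses that the adjoined identity lies outside $\elt{A^+} = S$.
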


\begin{proposition}[{\cite[Theorem~1.1]{duncan_change}}]
\label{prop:changegens}
Let $M$ be a monoid with automatic structure $(A,L)$ and let $B$ represent a finite \textup{[}semigroup\textup{]} generating set for $M$. Then there exists an automatic structure $(B,K)$ for $M$.
\end{proposition}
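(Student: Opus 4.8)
The plan is to transport the structure $(A,L)$ across the change of alphabet by means of a length-distorting substitution, and then to control the resulting desynchronization using the geometric regularity that an automatic structure enjoys. Since $B$ generates $M$ as a semigroup, each $a \in A$ satisfies $\elt a = \elt{\beta_a}$ for some $\beta_a \in B^+$; extend $a \mapsto \beta_a$ to a substitution $\psi : A^* \to B^*$, which by construction satisfies $\elt{\psi(w)} = \elt w$ for all $w \in A^*$. Set $K = \psi(L)$. Then $K \subseteq B^+$ is regular, because regular languages are closed under substitution, and $\elt K = \elt{\psi(L)} = \elt L = M$, because $\psi$ preserves represented elements. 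So $(B,K)$ is a candidate right-au\-to\-mat\-ic structure, and the whole content is to verify that the multiplier language $K_b$ is regular for each $b \in B \cup \{\emptyword\}$.

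The first step is to reduce each $B$-multiplier to an $A$-multiplier. For $b \in B \cup \{\emptyword\}$ choose $\alpha_b \in A^*$ with $\elt{\alpha_b} = \elt b$ (taking $\alpha_\emptyword = \emptyword$). For $u,v \in L$ one has $\elt{\psi(u)b} = \elt u\,\elt b$ and $\elt{\psi(v)} = \elt v$, so $\elt{\psi(u)b} = \elt{\psi(v)}$ holds if and only if $\elt{u\alpha_b} = \elt v$; moreover this depends only on $\elt{\psi(u)}$ and $\elt{\psi(v)}$, not on the choice of $\psi$-preimage. Consequently $K_b = \{(\psi(u),\psi(v)) : u,v \in L,\ \elt{u\alpha_b} = \elt v\}\delta_B$, i.e.\ $K_b$ is the image under the substitution $\psi$ (applied componentwise) of the \emph{multiply-by-the-word-$\alpha_b$} relation $L_{\alpha_b} = \{(u,v) : u,v \in L,\ \elt{u\alpha_b} = \elt v\}\delta_A$.

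Two facts then suffice. First, $L_{\alpha_b}$ is regular. Multiplication by a single generator is regular by hypothesis, and multiplication by an arbitrary word is obtained by iterating this; the iteration is legitimate because an automatic structure has the bounded length difference property (if $u,v \in L$ and $d(\elt u,\elt v) \leq 1$ then $\bigl||u|-|v|\bigr|$ is bounded by a constant). Without bounded length difference the successive compositions of synchronous two-tape relations need not remain synchronous, but with it every intermediate pair of representatives stays within a fixed length window, so the composite relation is again recognised by a synchronous automaton over $A(2,\$)$. Second, one must convert such an automaton for $L_{\alpha_b}$ into one for $K_b$ over $B(2,\$)$.

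I expect this last conversion to be the main obstacle. The difficulty is that $\psi$ distorts lengths letter-by-letter, so the position in $\psi(u)$ corresponding to a given position $t$ in $u$ drifts relative to the corresponding position in $\psi(v)$; a priori the componentwise $\psi$-image of a synchronous relation is only an asynchronous (rational) relation over $B$. The resolution is to invoke the fellow traveller property that accompanies an automatic structure: the pairs $(u,v)$ occurring in $L_{\alpha_b}$ have uniformly bounded length difference and their prefixes track one another within a fixed distance $k$ in $\Gamma(M,A)$, which bounds the drift $\bigl||\psi(u(t))| - |\psi(v(t))|\bigr|$ between corresponding positions of $\psi(u)$ and $\psi(v)$ by a constant. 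A finite automaton reading $\delta_B(\psi(u),\psi(v))$ can therefore buffer this bounded drift in its state while simulating the synchronous $A(2,\$)$-automaton for $L_{\alpha_b}$, and thereby recognise $K_b$. Carefully reconciling the synchronous reading over $B$ with the synchronous computation over $A$ through this bounded buffer is the technical heart of the argument; the remaining verifications are routine.
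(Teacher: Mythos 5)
The paper itself does not prove this proposition --- it is quoted directly from Duncan, Robertson \& Ru\v{s}kuc \cite[Theorem~1.1]{duncan_change} --- so your attempt must be judged on its own terms, and it has a genuine gap at exactly the step you call the technical heart. The claim that the fellow traveller property bounds the drift $\bigl||\psi(u(t))| - |\psi(v(t))|\bigr|$ is false: fellow travelling controls the Cayley-graph distance between $\elt{u(t)}$ and $\elt{v(t)}$, and says nothing about the letter-by-letter $\psi$-lengths of the prefixes. Concretely, let $M = (\nset \cup \{0\},+)$, let $A = \{a,b,i\}$ with $\elt{a} = \elt{b} = 1$, $\elt{i} = 0$, and $L = \{i\} \cup \{a,b\}^+$; this is an automatic structure (two words of $\{a,b\}^+$ represent the same element if and only if they have the same length, so each multiplier language is regular). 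Take $B = \{c,j\}$ with $\elt{c} = 1$, $\elt{j} = 0$, and the perfectly legitimate choice $\psi(a) = c$, $\psi(b) = jc$, $\psi(i) = j$. The pairs $(a^n,b^n)$ lie in $L_\emptyword$ and fellow-travel at distance $0$, yet the drift at time $t$ equals $t$. Worse, the failure is not merely in your justification but in the construction itself: $K = \psi(L) = \{j\} \cup \{c,jc\}^+$ is not an automatic structure, because a $\delta_B$-word of the form $[(c,j)(c,c)]^k[(\$,j)(\$,c)]^m$ encodes the pair $(c^{2k},(jc)^{k+m})$, which lies in $K_\emptyword$ precisely when $2k = k+m$; hence $K_\emptyword$ meets the regular language $[(c,j)(c,c)]^+[(\$,j)(\$,c)]^*$ in the non-regular set $\{[(c,j)(c,c)]^k[(\$,j)(\$,c)]^k : k \geq 1\}$, so $K_\emptyword$ is not regular.

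There is also a decisive structural objection: your argument never uses the hypothesis that $M$ is a monoid (the substitution, the words $\alpha_b$, and the compositions all make sense in any semigroup), but the corresponding statement for semigroups is false --- automaticity of a semigroup is known to depend on the choice of generating set, which is precisely why Duncan--Robertson--Ru\v{s}kuc restrict to monoids. Any correct proof must use the identity element in an essential way (words over $B$ representing $1$ are what allow the substituted representatives to be re-synchronised), and the proof in the cited paper is accordingly much more involved than a substitution-plus-buffer argument. Two smaller remarks: your worry that composing synchronous relations requires bounded length difference is unfounded --- composition preserves synchronous regularity unconditionally (this is essentially Proposition~\ref{prop:composition}), so your regularity of $L_{\alpha_b}$ is fine; and your identification of $K_b$ with the componentwise $\psi$-image of $L_{\alpha_b}$ is a correct set identity. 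The entire gap is that the componentwise image of a synchronous regular relation under a length-distorting substitution need not be synchronous regular, and no fellow-travelling argument can force it to be.
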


\begin{theorem}[{\cite[Theorem~3.14]{crr_ssgofg}}]
\label{thm:autsgcharbyfellow}
Let $S$ be a semigroup that embeds in a group. Let $(A,L)$ be a rational structure for $S$. Then $(A,L)$ is an automatic structure for $S$ if and only if there exists a constant $\lambda \in\nset$ such that for all $a \in A \cup \{\emptyword\}$, if $u,v \in L$ are such that $\elt{ua} = \elt{v}$, then for all $t$ the distance from $\elt{u(t)}$ to $\elt{v(t)}$ is less than $\lambda$.
\end{theorem}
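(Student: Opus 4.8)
The plan is to argue via \emph{word differences} in the overgroup. Let $G$ be the group into which $S$ embeds; replacing $G$ by the subgroup generated by $\elt{A}$, I may assume $G$ is finitely generated and that the adjoined basepoint of $\Gamma(S,A)$ is the identity of $G$. For $u,v \in A^+$ and $t \geq 0$, call $\elt{u(t)}^{-1}\elt{v(t)} \in G$ the word difference at time $t$. Reading one further letter in each track multiplies this on the left by a generator inverse and on the right by a generator (a padding letter contributing $\emptyword$), so consecutive word differences differ by bounded multiplication; correspondingly $d(\elt{u(t)},\elt{v(t)})$ changes by at most $2$ as $t$ increases by $1$. The whole argument rests on a bridge lemma: \emph{for the rational structure $(A,L)$ the fellow-traveller bound of the statement holds if and only if the set of word differences arising from pairs in $\bigcup_{a} L_a$ is finite.} One implication is immediate --- an undirected path of length less than $\lambda$ reads a word of length less than $\lambda$ over $A \cup A^{-1}$, so every such difference lies in the finite ball of radius $\lambda$ in $G$ --- and I will return to the harder converse below.

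For the implication `fellow-traveller $\Rightarrow$ automatic', I would first invoke the easy half of the bridge lemma to obtain a finite set $D \subseteq G$ containing all word differences. For each $a \in A \cup \{\emptyword\}$ I then build a finite automaton recognising $L_a$ as a product of three components: the given automaton for $L$ running on the first track, a second copy running on the second track, and a \emph{word-difference tracker} whose state set is $D$ together with a dead state, which on reading $(b,c) \in A(2,\$)$ updates its stored element $g$ to $\elt{b}^{-1} g\, \elt{c}$ and fails if the result leaves $D$. Accepting when both $L$-components accept and the tracker holds $\elt{a}$ recognises exactly $\{(u,v) : u,v \in L,\ \elt{ua} = \elt{v}\}\delta_A$, so $(A,L)$ is automatic.

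For the converse `automatic $\Rightarrow$ fellow-traveller', fix $a$ and a deterministic automaton $M_a$ recognising $L_a$, say with $N$ states. The key step is a pumping/splicing argument showing that, on any accepting run, the word difference is a function of the current state of $M_a$: if a state recurs at times $i < j$ then deleting the intervening block yields a shorter word still accepted by $M_a$, hence a genuine pair with $\elt{u'a} = \elt{v'}$; since the final word difference of any accepted pair must equal $\elt{a}$, comparing the two runs forces the differences at times $i$ and $j$ to coincide, and a cross-run splice removes the dependence on the chosen run. Consequently at most $N$ distinct word differences occur for each $a$, so only finitely many occur in total. Applying the hard half of the bridge lemma then yields the desired constant $\lambda$.

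The main obstacle is precisely that hard half: deducing a uniform bound on the undirected Cayley-graph distance $d(\elt{u(t)},\elt{v(t)})$ from finiteness of the word differences. In a group the two measurements agree up to constants for free, but in a semigroup the Cayley graph need not carry the backward edges that a short path in $G$ would traverse, so a small word difference does not obviously connect its endpoints by a short undirected path. This is where group-embeddability must be used in earnest: because $u$ and $v$ differ overall by the single generator $a$, the vertices $\elt{u(t)}$ and $\elt{v(t)}$ are prefix-parallel, and one exploits the geometric properties that automatic structures on group-embeddable semigroups inherit (as recorded in the references of the introduction) to turn each of the finitely many differences into a bounded undirected detour. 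I expect establishing this correspondence, rather than the automaton bookkeeping, to be the substantive part of the proof.
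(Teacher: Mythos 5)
This theorem is not proved in the paper at all --- it is imported verbatim from \cite[Theorem~3.14]{crr_ssgofg} --- so your attempt has to be measured against the proof in that reference, whose structure is asymmetric: word differences in the overgroup are used only for the implication ``fellow traveller $\Rightarrow$ automatic'' (your tracker automaton is essentially that argument, and that half of your proposal is sound), while the implication ``automatic $\Rightarrow$ fellow traveller'' is proved by a direct automaton-completion argument that never mentions word differences. That is exactly where your genuine gap lies: you reduce this second direction to the ``hard half'' of your bridge lemma --- finiteness of the set of word differences implies a uniform bound on the \emph{undirected distance in $\Gamma(S,A)$} --- and then concede that you do not know how to prove it. Nothing in your proposal supplies that step, and it is not a routine detail: a fixed group element $g = \elt{u(t)}^{-1}\elt{v(t)}$ is represented by some short word over $A \cup A^{-1}$, but the path that word traces from $\elt{u(t)}$ has intermediate vertices that need not lie in $S \cup \{1\}$, hence need not exist in $\Gamma(S,A)$ at all; and your appeal to the ``inherited geometric properties'' of automatic group-embeddable semigroups is circular, since those properties are consequences of the very theorem you are proving.

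The repair is to abandon the bridge lemma in that direction and argue inside the Cayley graph directly. Fix $a$ and an automaton with $N$ states accepting $L_a$, and let $(u,v)\delta_A \in L_a$. After reading the prefix $(u(t),v(t))\delta_A$ the automaton sits on an accepting run, so it can reach acceptance within $N$ further letters: there exist $w_1,w_2 \in A^*$ with $|w_1|,|w_2| \leq N$, $u(t)w_1 \in L$, $v(t)w_2 \in L$, and $\elt{u(t)w_1a} = \elt{v(t)w_2}$. Now travel forward from $\elt{u(t)}$ along $w_1$ and then $a$ (every vertex passed is an element of $S$), arriving at $\elt{u(t)w_1a} = \elt{v(t)w_2}$, and then travel \emph{backward} along the last $|w_2|$ edges of the path $\path{v(t)w_2}$, which exist in $\Gamma(S,A)$ because $v(t)w_2$ labels a path from the basepoint. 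This exhibits an undirected path of length at most $2N+1$, so $\lambda = 2N+2$ works --- no conversion of group elements into semigroup paths is ever required. (Your pumping claim that the word difference is constant on states of the automaton is correct, and provable by the splice you sketch, but it feeds into a dead end; it is the completion argument, not finiteness of differences, that yields the distance bound.)
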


\begin{proposition}[{\cite[Proposition~2.3]{campbell_autsg}}]
\label{prop:composition}
Let $U$ and $V$ be subsets of $A^+ \times A^+$ such that $U\delta_A$ and $V\delta_A$ are regular. Then
\[
(U \circ V^{-1})\delta_A = \{(u,v)\delta_A : (\exists w \in A^+)((u,w) \in U \land (v,w) \in V)\}
\]
is also a regular language over $A(2,\$)$.
\end{proposition}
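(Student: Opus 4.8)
The plan is to realise $U \circ V^{-1}$ through the three standard closure operations available for synchronously padded regular relations: cylindrification (adjoining an unconstrained track), intersection, and projection (existentially quantifying away a track). Regularity will be preserved at each stage by the usual closure of regular languages under homomorphism, inverse homomorphism, and intersection; the only real work lies in keeping the padding symbol $\$$ correctly placed, because changing the arity of a relation changes which component is longest and hence where the padding sits.

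First I would pass to a three-track encoding. Writing letters of $A(3,\$)$ as triples over $A \cup \{\$\}$ (excluding $(\$,\$,\$)$) and padding triples of words to common length $\max(|u|,|v|,|w|)$ by a map $\delta_A^{(3)}$, set
\[
P = \{(u,v,w) : (u,w) \in U\}, \qquad Q = \{(u,v,w) : (v,w) \in V\}.
\]
The first step is to show $P\delta_A^{(3)}$ is regular. Let $\pi$ be the coordinate projection $A(3,\$)^* \to ((A\cup\{\$\})^2)^*$ retaining the first and third tracks. A word lies in $P\delta_A^{(3)}$ exactly when it is correctly padded and its $\pi$-image is a word of $U\delta_A$ possibly followed by some trailing $(\$,\$)$ columns; these extra columns arise precisely when $v$ is strictly longer than both $u$ and $w$, and the decomposition is unambiguous because no word of $U\delta_A$ contains $(\$,\$)$ at all. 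Hence $P\delta_A^{(3)} = \pi^{-1}\bigl(U\delta_A\cdot\{(\$,\$)\}^*\bigr) \cap N_3$, where $N_3$ is the (regular) set of correctly padded three-track words; this is regular. The symmetric construction gives regularity of $Q\delta_A^{(3)}$.

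With both three-track languages regular, their intersection $P\delta_A^{(3)}\cap Q\delta_A^{(3)}$ is regular and encodes exactly the triples with $(u,w)\in U$ and $(v,w)\in V$. It then remains to quantify $w$ away. Applying the length-preserving homomorphism that deletes the third coordinate of each letter sends this language to a regular language whose words encode the pairs $(u,v)$ of the composition, but possibly over-padded: wherever $w$ was the unique longest component, a trailing block of $(\$,\$)$ columns survives on the remaining two tracks. Intersecting the image with the regular language $\{(u,v)\delta_A : u,v \in A^+\}$ of correctly padded two-track words discards both these spurious columns and any letters $(\$,\$)$ the homomorphism may have produced, leaving precisely $(U \circ V^{-1})\delta_A$.

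The step I expect to be the main obstacle is this padding bookkeeping rather than any automata-theoretic depth: every change of arity shifts the maximal component length, so one must \emph{add} tolerance for trailing $(\$,\$)$ columns when lengthening tuples (the concatenation with $\{(\$,\$)\}^*$ before pulling back) and \emph{remove} spurious padding when shortening them (the intersection with the well-padded language). Once these two normalisations are isolated and shown to be regular operations, closure of the regular languages under the remaining set-theoretic and homomorphic manipulations finishes the argument.
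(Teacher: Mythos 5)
Your overall strategy (cylindrify to three tracks, intersect, project away the witness track) is the standard one, and your first two stages are sound: the identity $P\delta_A^{(3)} = \pi^{-1}\bigl(U\delta_A\cdot\{(\$,\$)\}^*\bigr)\cap N_3$ is correct (and your observation that words of $U\delta_A$ never contain the letter $(\$,\$)$ does make the factorisation unambiguous), as is the intersection step. The gap is in the final step, where you claim that intersecting the projected language with the set of correctly padded two-track words ``discards the spurious columns''. Intersection cannot trim columns from a word; it can only delete whole words. If $(u,v,w)$ is a triple with $(u,w)\in U$, $(v,w)\in V$ and $|w|>\max(|u|,|v|)$, then the image of $(u,v,w)\delta_A^{(3)}$ under the deleting homomorphism is $(u,v)\delta_A\cdot(\$,\$)^k$ with $k>0$, which is not correctly padded, so this entire word is removed by your intersection. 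Consequently the language you construct is $\{(u,v)\delta_A : \exists w\,((u,w)\in U \wedge (v,w)\in V \wedge |w|\le\max(|u|,|v|))\}$, which can be a proper subset of $(U\circ V^{-1})\delta_A$. Concretely, take $A=\{a\}$ and $U=V=\{(a,a^2)\}$: then $(a,a)\in U\circ V^{-1}$, its only witness is $w=a^2$, the projected language is the single word $(a,a)(\$,\$)$, and your final intersection is empty instead of being $\{(a,a)\delta_A\}$. (For comparison purposes, note the paper itself gives no proof --- it imports the proposition by citation from Campbell, Robertson, Ru\v{s}kuc \& Thomas --- so your proposal stands or falls on its own.)

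The fix is small but necessary: instead of intersecting the projected image $M$ with the well-padded language, first take the right quotient $M/\{(\$,\$)\}^* = \{y : \exists z\in\{(\$,\$)\}^*\ (yz\in M)\}$, which is regular because regular languages are closed under right quotient by a regular language; this genuinely removes trailing $(\$,\$)$ blocks rather than discarding the words carrying them. Every word of $M$ has the form $(u,v)\delta_A\cdot(\$,\$)^k$ with $(u,v)\in U\circ V^{-1}$, and conversely each $(u,v)\in U\circ V^{-1}$ contributes such a word for any choice of witness $w$; hence the quotient contains $(u,v)\delta_A$ exactly when $(u,v)\in U\circ V^{-1}$, together with some partially trimmed words still ending in $(\$,\$)$. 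Intersecting the quotient (not $M$) with the correctly padded two-track words then yields precisely $(U\circ V^{-1})\delta_A$. With that single replacement your argument is complete.
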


\begin{definition}
Let $A$ be an alphabet. For any word $w = w_1\cdots w_n \in A^+$, where $w_i \in A$, define $w^\rev$ to be $w_n\cdots w_1$.

Let $R \subseteq A^+ \times A^+$, and let $S = \sgpres{A}{\rel{R}}$. Define $\rel{R}^\rev$ by
\[
\rel{R}^\rev = \{(u^\rev,v^\rev) : (u,v) \in \rel{R}\}.
\]
Define $S^\rev$ to be $\sgpres{A}{\rel{R}^\rev}$.
\end{definition}

Notice that $(S^\rev)^\rev = S$.

\begin{lemma}[{\cite[Lemma~3.4]{hoffmann_notions}}]
\label{lem:revaut}
The semigroup $S$ is right-au\-to\-mat\-ic if and only if the semigroup
$S^\rev$ is left-au\-to\-mat\-ic.
\end{lemma}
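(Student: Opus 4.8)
The plan is to establish the equivalence by exploiting the symmetry between left- and right-multiplication under word reversal. The key observation is that reversing words swaps the roles of prefixes and suffixes and simultaneously interchanges left- and right-multiplication: if $\elt{ua} = \elt{v}$ holds in $S$, then reading the relation backwards gives $\elt{a^\rev u^\rev} = \elt{v^\rev}$ in $S^\rev$, so right-multiplication in $S$ corresponds to left-multiplication in $S^\rev$. First I would fix a right-automatic structure $(A,L)$ for $S$ and form the candidate structure $(A, L^\rev)$ for $S^\rev$, where $L^\rev = \{w^\rev : w \in L\}$. Since the class of regular languages is closed under reversal, $L^\rev$ is regular, and since word reversal does not change which generators appear, $\elt{L^\rev} = S^\rev$ follows once one checks that $\elt{w^\rev}$ in $S^\rev$ ranges over all of $S^\rev$ as $w$ ranges over $L$.

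The main work is to show that the left-multiplication relations ${}_aL^\rev$ for $S^\rev$ are regular, given that the right-multiplication relations $L_a$ for $S$ are regular. I would set up an explicit correspondence: a pair $(u,v)$ lies in the set defining ${}_aL^\rev$ for $S^\rev$ precisely when $\elt{au} = \elt{v}$ in $S^\rev$ with $u,v \in L^\rev$, and I would translate this, via reversal, into the condition $\elt{u^\rev a} = \elt{v^\rev}$ in $S$ with $u^\rev, v^\rev \in L$. Thus the pair $(u,v)$ arises from a pair $(u^\rev, v^\rev)$ in the set defining $L_a$ for $S$. The point is then that the padded language $({}_aL^\rev)$ over $A(2,\$)$ is obtained from the padded language $(L_a)$ by applying reversal componentwise to the pairs.

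The delicate step—and the one I expect to be the main obstacle—is that componentwise reversal of a pair of words does \emph{not} commute cleanly with the padding map $\delta_A$ when the two words have different lengths, because $\delta_A$ places all padding symbols at the right-hand end. Reversing $(u,v)$ moves the material that was padded (the overhang of the longer word) from the right end to the left end, so the naive image is a language padded on the left rather than on the right. To handle this I would either invoke the remark recorded in the excerpt that for cancellative (hence group-embeddable) semigroups, automaticity with right padding and with left padding coincide, thereby letting me pass freely between the two padding conventions; or, alternatively, I would argue directly that the map on $A(2,\$)^*$ induced by componentwise reversal together with the shift of padding from one end to the other is a rational transduction, under which regularity is preserved. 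Either route reduces regularity of ${}_aL^\rev$ to regularity of $L_a$.

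Finally, having shown that $(A, L^\rev)$ is a left-automatic structure for $S^\rev$ whenever $(A,L)$ is a right-automatic structure for $S$, I would obtain the reverse implication for free: since $(S^\rev)^\rev = S$, applying the construction to $S^\rev$ shows that if $S^\rev$ is left-automatic then $S = (S^\rev)^\rev$ is right-automatic. This symmetry gives both directions of the equivalence and completes the argument.
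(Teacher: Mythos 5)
First, a point of comparison: the paper does not prove this lemma at all --- it is quoted from Hoffmann \& Thomas \cite[Lemma~3.4]{hoffmann_notions} --- so your proposal can only be judged on its own terms. On those terms it is essentially sound, and it isolates exactly the right subtlety. The reversal construction $(A,L^\rev)$, the translation ``$\elt{au}=\elt{v}$ in $S^\rev$ if and only if $\elt{u^\rev a}=\elt{v^\rev}$ in $S$'', and the observation that componentwise reversal does \emph{not} commute with $\delta_A$ --- indeed $(u^\rev,v^\rev)\delta_A$ is the word-reversal of the \emph{left}-padded convolution of $(u,v)$, not of the right-padded one --- together constitute the real content of the result. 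Your first remedy is the correct one: reversal hands you, for free (regular languages are closed under word reversal), a left-automatic structure for $S^\rev$ \emph{with padding on the left}, and \cite[Remark~8.3]{hoffmann_notions} then moves the padding back to the right. Two caveats: Remark~8.3 requires cancellativity, a hypothesis absent from the lemma as stated (harmless in this paper, which applies the lemma only to group-embeddable, hence cancellative, semigroups --- the bracketed remark after Definition~\ref{def:autstruct} exists precisely to license this manoeuvre); and your parenthetical ``cancellative (hence group-embeddable)'' has the implication backwards.

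Your second, alternative remedy is a genuine error and should be dropped rather than offered as an equal option. The operation ``shift all padding from one end of the string to the other'' is not a rational transduction, and no regularity-preserving device can perform it on arbitrary padded relations. Concretely, take $R=\{(a^i,a^ib^j): i\geq 1,\, j\geq 0\}$ over the alphabet $\{a,b\}$. Then $R\delta_A=\{(a,a)^i(\$,b)^j : i \geq 1,\, j \geq 0\}$ is regular, but the componentwise-reversed relation $\{(a^i,b^ja^i)\}$ has $\delta_A$-image whose intersection with the regular set $(a,b)^*(a,a)^*(\$,a)^*$ is $\{(a,b)^j(a,a)^k(\$,a)^j : j,k\geq 0\}$, which a pumping argument shows is not regular. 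So regularity of $L_a$ does not formally imply regularity of ${}_a(L^\rev)$; some additional input --- in your proof, cancellativity via Remark~8.3 --- is indispensable, which is exactly why the padding conventions receive substantive treatment in Hoffmann \& Thomas rather than being a notational formality.
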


\section{The Cayley graph of the Baumslag--Solitar semigroups}

\begin{figure}[tb]
\centerline{\includegraphics{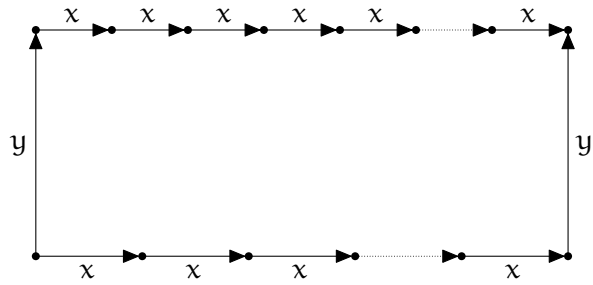}}
\caption{Part of the Cayley graph of the Baumslag--Solitar semigroup $\BSS(m,n)$: a cell describing the relation $(yx^m,x^ny)$.}
\label{fig:bsscg1}
\end{figure}

\begin{figure}[tb]
\centerline{\includegraphics{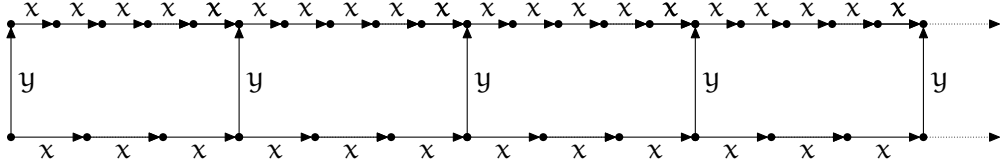}}
\caption{Part of the Cayley graph of the Baumslag--Solitar semigroup $\BSS(5,3)$.}
\label{fig:bsscg2}
\end{figure}

Fix $m,n$ with $m > n$. The present section describes how to construct
the Cayley graph of $\BSS(m,n)$. First of all, consider a single cell
describing the relation $(yx^m,x^ny)$, as shown in
Figure~\ref{fig:bsscg1}. Join copies of this cell along the edges
labelled $y$ as shown in Figure~\ref{fig:bsscg2}; this row of cells is
infinite (to the right).

Let $\omega$ be the basepoint of the graph, from which extends an
infinite horizontal row of edges each labelled by $x$. Now proceed
inductively: to every infinite horizontal row $R$ of edges labelled by
$x$ that has now yet been considered, add $n$ copies of the row of
cells shown in Figure~\ref{fig:bsscg2}, identifying the basepoint of
the $(k-1)$-th such row of cells with the $k$-th vertex from the left
of $R$. Viewed side-on, the fragment of the graph constructed thus far
is a `fan' with $n$ spokes.  [Figures~\ref{fig:bsscg3}
  and~\ref{fig:bsscg3a} show this step in the construction for $m=3$
  and $n=2$.] This procedure inductively constructs the Cayley graph
of $\BSS(m,n)$. Viewed side-on, this graph is an infinite $n$-ary
tree, as shown in Figure~\ref{fig:bsssideon}. Select an infinite path
climbing this tree and take the subset of the Cayley graph that
projects to this path. Call such a subset a \defterm{branch} of the
Cayley graph. [The construction of the Cayley graph just described is
  similar to that for the Baumslag--Solitar group $\BSG(m,n)$ given by
  \cite[Section~7.4]{epstein_wordproc}.]

\begin{figure}[tb]
\centerline{\includegraphics{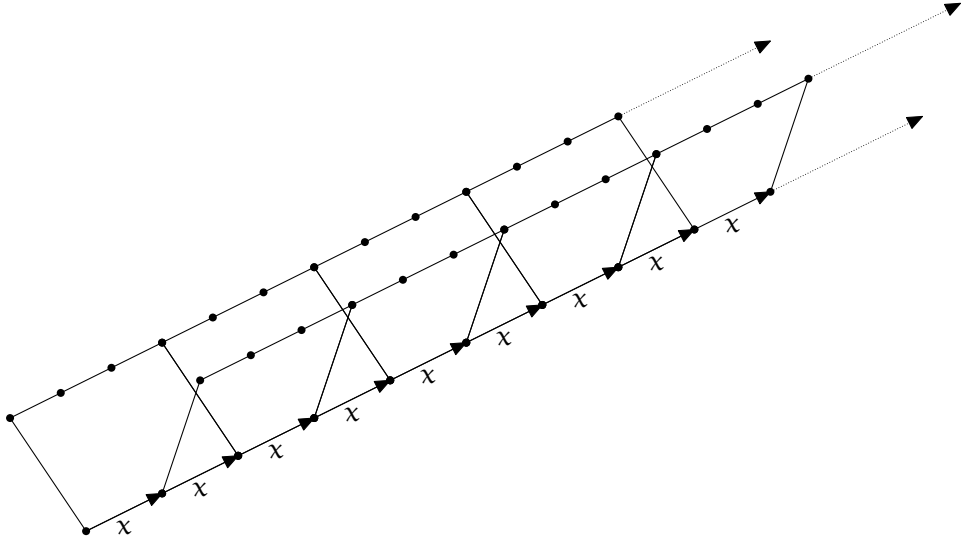}}
\caption{Part of the Cayley graph of the Baumslag--Solitar semigroup $\BSS(3,2)$.}
\label{fig:bsscg3}
\end{figure}

\begin{figure}[tb]
\centerline{\includegraphics{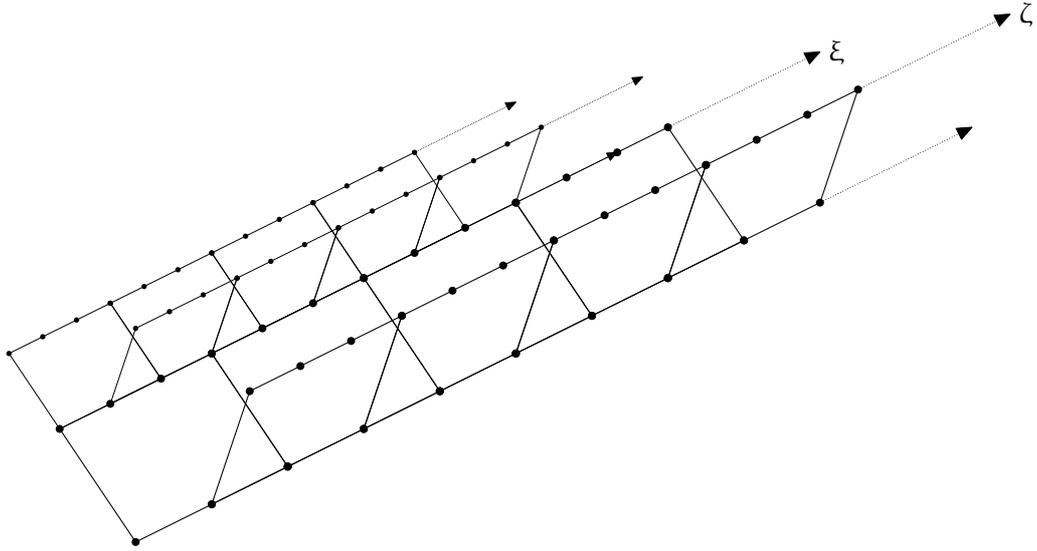}}
\caption{Part of the Cayley graph of the Baumslag--Solitar semigroup $\BSS(3,2)$. Two rows of cells have just been added to the row of $x$-edged marked $\xi$; the next step in the inductive construction is to add two rows of cells to the row of $x$-edges marked $\zeta$.}
\label{fig:bsscg3a}
\end{figure}

\begin{figure}[tb]
\centerline{\includegraphics{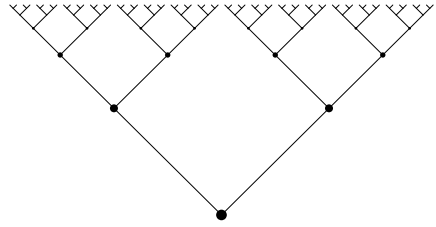}}
\caption{Part of the Cayley graph of the Baumslag--Solitar semigroup $\BSS(3,2)$ viewed `side-on'. The only edges `visible' are labelled by $y$; the edges labelled by $x$ are pointing into the page.}
\label{fig:bsssideon}
\end{figure}

\begin{figure}[tb]
\centerline{\includegraphics{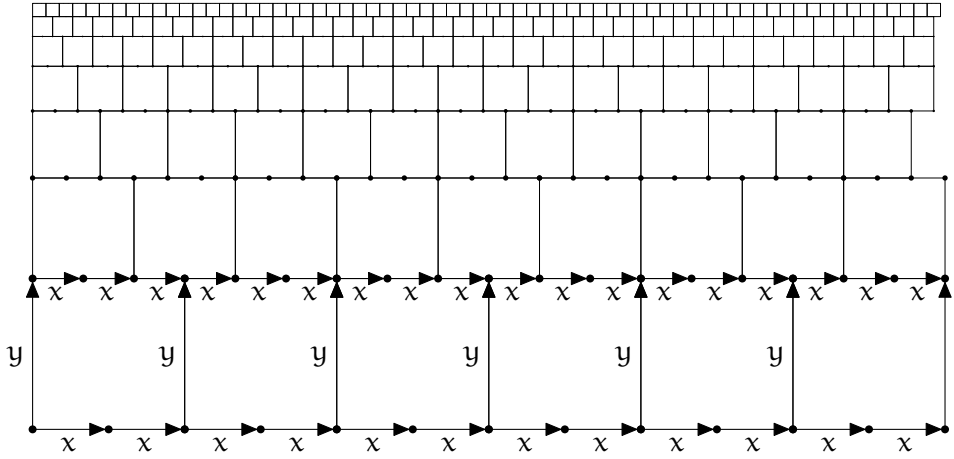}}
\caption{Part of a branch of the Cayley graph of the Baumslag--Solitar semigroup $\BSS(3,2)$ embedded into the Euclidean plane.}
\label{fig:bsscg4}
\end{figure}

A branch of the Cayley graph of $\BSS(m,n)$ may be embedded into the Euclidean plane as shown in Figure~\ref{fig:bsscg4}. Notice that all cells describing a relation $(yx^m,x^ny)$ are similar squares, being scaled by $n/m$ as one climbs from one row to the next. Define a concept of `horizontal distance' within rows of $x$-edges by taking the distance between vertices in the Cayley graph and extending to edges by linear interpolation.

Each element of the Baumslag--Solitar semigroup $\BSS(m,n)$ has a normal form
\[
x^{k_0}yx^{k_1}yx^{k_2}\cdots yx^{k_j}yx^l,
\]
where each $k_i$ is less than $n$; such a normal form can be obtained
from any word over $x$ and $y$ by using the defining relation to move
letters $x$ as far to the right as possible. Identify elements of
$\BSS(m,n)$ with these normal forms.

\section{All subsemigroups right-au\-to\-mat\-ic when $m > n$}

Choose and fix $m,n \in \nset$ with $m>n$. Let $A$ be a finite
alphabet representing a subset of $\BSS(m,n)$. Let $S$ be the
subsemigroup generated by $\elt{A}$. In the Cayley graph
$\Gamma(S,A)$, imagine a word $w \in A^+$ as labelling an edge
from each element $s \in S$ direct to $s\elt{w}$. In an appropriate
branch of the Cayley graph of $\BSS(m,n)$, consider an edge from $s$
to $s\elt{w}$ labelled by $w$. Embed this branch into the Euclidean
plane, so that this edge becomes a straight line between $s$ and
$s\elt{w}$; see Figure~\ref{fig:bsscg5a}. Now, because of the similarity of the various cells
mentioned above, the angle $\theta_w$ between this edge and the
horizontal axis is independent of $s$. Notice further that this angle
must lie between $0$ and $\pi/2$.

Let $A'$ be that subset of $A$ whose letters represent elements of
$\BSS(m,n)$ of the form $x^k$ for some $k \in \nset$ (that is, letters
$a$ such that $\theta_a = 0$). Let $A'' = A - A'$. For each $a \in
A$; let $\beta_a$ be the number of symbols $y$ in $\elt{a}$. (So $a \in A'$ if and only if $\beta_a = 0$.) Similarly for $a \in A$, let $\gamma_a$ be the number of letters $x$ in $\elt{a}$.  Let $g = \max\{\gamma_a : a \in A\}$.

\begin{lemma}
\label{lem:critval}
Let $X$ be a finite subset of $A^+ - (A')^+$. Let $h \in \nset$ be
arbitrary. Then there exists a horizontal distance $\alpha_{h,X} > h$
with the following property: if edges labelled by $w,z \in X$
intersect two adjacent $x$-rows at points $p$, $q$ and $p'$, $q'$,
respectively, and the horizontal distance between $p$ and $q$ exceeds
$\alpha_{h,X}$, then the horizontal distance between $p'$ and $q'$
also exceeds $\alpha_{h,X}$.
\end{lemma}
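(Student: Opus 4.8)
The plan is to make the self-similar scaling of the branch explicit and then to track how the horizontal separation between the two chords evolves from one $x$-row to the next. First I would embed the branch in the Euclidean plane as in Figure~\ref{fig:bsscg4}, index its $x$-rows by height as $R_0, R_1, R_2, \ldots$ from the bottom, and record that each cell is scaled by $n/m$ relative to the one below it; thus the $x$-edges of $R_i$ have Euclidean length proportional to $(n/m)^i$ and the strip between $R_i$ and $R_{i+1}$ has Euclidean height proportional to $(n/m)^i$. The defining relation $x^n y = y x^m$ means that $n$ consecutive $x$-edges on $R_i$ span the same vertical fibres as $m$ consecutive $x$-edges on $R_{i+1}$; equivalently, horizontal distance (measured in the Cayley graph and extended by interpolation) is multiplied by $m/n$ on passing up one row. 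Because $X$ is finite and every $w \in X$ lies outside $(A')^+$, each chord makes an angle $\theta_w$ with $0 < \theta_w < \pi/2$, so the quantities $\cotan\theta_w$ form a finite, hence bounded, set.

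Taking the pair $p, q$ to lie on the lower of the two adjacent rows, say $R_i$, and $p', q'$ on the upper row $R_{i+1}$, I would compute the recursion governing the signed horizontal separation. Writing $c^w_i$ and $c^z_i$ for the horizontal Cayley coordinates on $R_i$ of the crossing points of the chords labelled $w$ and $z$, the constancy of the angle along each chord gives, for a single chord, $c^w_{i+1} = (m/n)c^w_i + (m/n)H\cotan\theta_w$, where $H$ is the scale-invariant ratio of a strip's height to the length of an $x$-edge in its lower row. Subtracting the analogous identity for $z$ and setting $D_i = c^w_i - c^z_i$, so that the horizontal distance between $p$ and $q$ is $|D_i|$ and that between $p'$ and $q'$ is $|D_{i+1}|$, yields the affine recursion
\[
D_{i+1} = (m/n)\,D_i + C_{w,z}, \qquad C_{w,z} = (m/n)H(\cotan\theta_w - \cotan\theta_z).
\]
The place at which $m > n$ enters decisively is that the homogeneous coefficient $m/n$ exceeds $1$: passing up a row expands separations, so a bounded additive correction $C_{w,z}$ cannot destroy a separation that is already large.

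It then suffices to absorb the additive term into a sufficiently large threshold. Since $X$ is finite, $K := \max_{w,z \in X}|C_{w,z}|$ is finite, and I would set
\[
\alpha_{h,X} = \max\!\left(h,\; \frac{K}{(m/n) - 1}\right) + 1,
\]
which is $> h$ as required. If $|D_i| > \alpha_{h,X}$, then the triangle inequality applied to the recursion gives $|D_{i+1}| \ge (m/n)|D_i| - K > (m/n)\alpha_{h,X} - K$, and the choice of $\alpha_{h,X}$ makes the right-hand side exceed $\alpha_{h,X}$; hence the horizontal distance between $p'$ and $q'$ exceeds $\alpha_{h,X}$, as claimed. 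The genuinely delicate part is the bookkeeping of the first paragraph---correctly converting between Euclidean lengths and Cayley horizontal distances across rows of differing scale so that the recursion comes out exactly affine with a single expanding coefficient; once that is in place the estimate is routine. The same computation also explains why the hypothesis must be placed on the lower row: read downward the coefficient becomes $n/m < 1$, which yields only an upper bound on the separation below, so the implication genuinely requires the expanding direction supplied by $m > n$.
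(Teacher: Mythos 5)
Your proof is correct and takes essentially the same route as the paper's: your affine recursion $D_{i+1} = (m/n)D_i + C_{w,z}$ is (with the paper's normalization $H=1$) exactly the paper's formula $\frac{m}{n}\bigl|d - \cotan\theta_w + \cotan\theta_z\bigr|$ for the separation on the next row, and your threshold $\max\bigl(h, K/((m/n)-1)\bigr)+1$ matches the paper's $\max\bigl[\{h\}\cup\{\cotan\theta_w-\cotan\theta_z\}\bigr]\frac{m}{m-n}$ up to packaging. The only cosmetic difference is that you conclude via the reverse triangle inequality that the new separation exceeds $\alpha_{h,X}$, while the paper shows it exceeds the old separation $d$ itself; both suffice.
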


[This is a purely geometrical lemma. Words $w$ and $z$ may not
  label lines passing through a particular choice of points $p$, $p'$,
  $q$ and $q'$.]

\begin{proof}
Notice that if $w \in X$, then $\elt{w} \neq x^k$ for any $k \in \nset$. So $\theta_w > 0$.

\begin{figure}[tb]
\centerline{\includegraphics{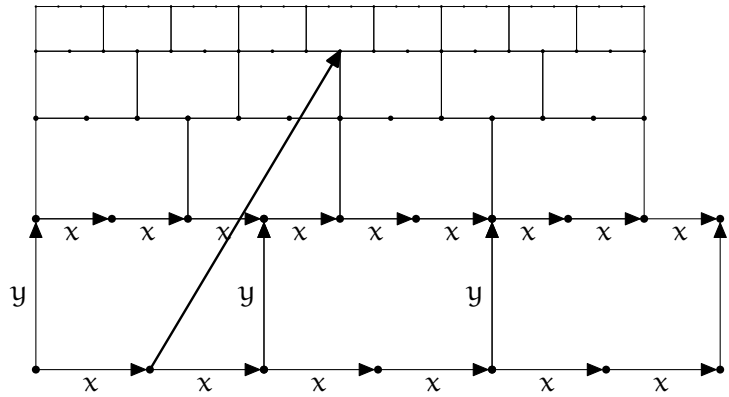}}
\caption{An edge running from $s$ to $s\elt{w}$, where $s = \elt{x}$
  and $w = xyxyy$. Observe that the similarity of the cells in this
  embedding into the Euclidean plane ensures that this edge makes the
  same angle with each row of $x$-edges, and that this angle is
  independent of $s$.}
\label{fig:bsscg5a}
\end{figure}

\begin{figure}[tb]
\centerline{\includegraphics{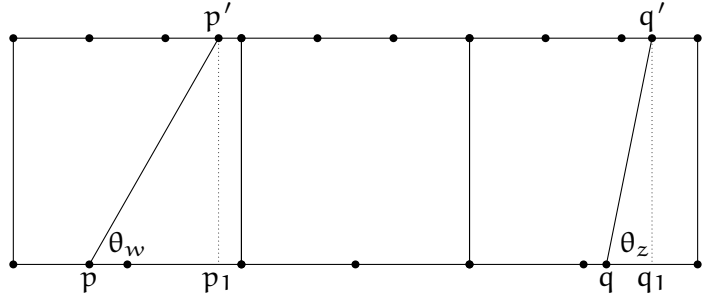}}
\caption{An example of the variance of horizontal distance from one
  $x$-row to the next. Notice that the horizontal distance from $p'$
  to $q'$ is $m/n$ (in this case $3/2$) times that between $p_1$ and
  $q_1$.}
\label{fig:bsscg5}
\end{figure}

Let $d$ be the horizontal distance between $p$ and $q$. Then the horizontal distance between $p'$ and $q'$ is given by:
\begin{equation}
\label{eq:hordist}
\frac{m}{n}\big|d - \cotan \theta_w + \cotan\theta_z\big|,
\end{equation}
as can be seen from Figure~\ref{fig:bsscg5}. [The absolute value is
  needed in case the two lines cross over. The `vertical' distance
  between the two $x$-rows is $1$. Although all concepts of distance
  discussed here relate to the Cayley graph, one must momentarily
  appeal to the Euclidean plane for trigonometric purposes.] Now,
$m>n$ so that $m/n >1$.


Let
\[
\alpha_{h,X} = \max\big[\{h\} \cup \big\{\cotan \theta_w - \cotan\theta_z : w,z\in X\big\}\big]\frac{m}{m-n};
\]
since $X$ is finite the maximum exists. If $d > \alpha_{h,X}$, and
$w,z \in X$, then
\begin{align*}
d &> \big(\cotan\theta_w - \cotan\theta_z\big)\frac{m}{m-n}\\
d(1 - n/m) &> \cotan\theta_w - \cotan\theta_z\\
\frac{m}{n}\big(d - \cotan\theta_w + \cotan\theta_z\big) &> d.
\end{align*}
That is, if the horizontal distance between $p$ and $q$ exceeds $\alpha_{h,X}$, then the distance between $p'$ and $q'$ is larger still. This completes the proof.
\end{proof}

As a consequence of the defining relation $(yx^m, x^ny)$, for any $a
\in A'$,
\[
y^k \elt{a^{m^k}} = \elt{a^{n^k}}y^k,
\]
and, more generally, for $b \in A''$ and $k = \beta_b$,
\[
\elt{b a^{m^k}} = \elt{a^{n^k}b}.
\]
Moreover, $\elt{a_1a_2} = \elt{a_2a_1}$, where $a_1,a_2 \in A'$. The
upshot of this is that every element of $S$ has a representative in
the set
\begin{equation}
\label{eq:ssgofbssnf1}
(A')^*K^* - \{\emptyword\},
\end{equation}
where $K$ is the finite set
\[
\big\{ba_1^{\alpha_1}\cdots a_l^{\alpha_l} : b \in A'', 0 \leq \alpha_i < m^{\beta_b}\big\}
\]
and $A' = \{a_1,\ldots,a_l\}$. [In a way, this is the reverse of the
  set of normal forms for $\BSS(m,n)$ --- letters representing powers
  of $x$ are now moved as far \emph{left} as possible.]

Let $1$ be a new symbol representing the adjoined identity of
$\adjone{S}$. Let $k$ be the maximum length of any element of
$K$. Define
\[
K' = \big\{bw1^{\beta_b (k+1)-|w|-1} : b \in A'', bw \in K\}.
\]
The set $K'$ consists of elements $bw$ of $K$ padded with symbols $1$
to a length that is a constant multiple of $\beta_b$. [The large
  constant multiple $(k+1)$ is necessary to ensure that the string of
  symbols $1$ always has positive length.] Define
\[
J = \{a1^{\gamma_a - 1} : a \in A'\}.
\]
(Recall that $\elt{a} = x^{\gamma_a}$ for each $a \in A'$.) 

Let $L = J^*(K')^* - \{\emptyword\} \cup \{1\}$. The language $L$
differs from the set \eqref{eq:ssgofbssnf1} only by padding using
symbols $1$ and the addition of the word $1$ to represent the adjoined
identity. Therefore, since the set \eqref{eq:ssgofbssnf1} maps onto
$S$, the language $L$ maps onto $\adjone{S}$. The aim is now to show
that $(A \cup \{1\},L)$ is an automatic structure for $\adjone{S}$.

Suppose $u,v \in L$ and $a \in A \cup \{\emptyword\}$ with $\elt{ua} =
\elt{v}$. Let $u = u'u''$ and $v = v'v''$, where $u',v' \in J^*$ and
$u'',v'' \in (K')^*$. The paths $\path{ua}$ and $\path{v}$ run from
the basepoint $\omega$ to a common vertex. As these paths never run `downwards'
through the Cayley graph, they lie in a common branch containing
$\elt{ua} = \elt{v}$. Isolate such a branch and embed it into the
Euclidean plane. The parts of the two paths labelled by $u'$ and $v'$
run along the lowest $x$-row.

Let $t \in \nset \cup \{0\}$. Suppose firstly that $t \leq
\min\{|u'|,|v'|\}$. As any word $w$ in $J$ has length equal to the
number of letters $x$ in $\elt{w}$, the same holds true for any word
in $J^*$. Any prefix of a word in $J^*$ is at most $M = \max\{|w| : w
\in J\}$ letters short of a member of $J^*$; the number of letters $x$
in $\elt{u'(t)}$ differs by at most $M$ from $t$. Similar reasoning
applies to $v'$; the distance between $\elt{u'(t)}$ and $\elt{v'(t)}$
is therefore bounded.

The words $u''$ and $v''$ label subpaths from $\elt{u'}$ and
$\elt{v'}$ to $\elt{u}$ and $\elt{v}$.  It is clear that the
horizontal distance from $\elt{u}$ to the intersection of path
labelled by $v''$ with the $x$-row in which $\elt{u}$ lies is at most
$g = \max\{\gamma_a : a\in A\}$.  Imagine these paths as made up of
`segments' $w \in K'$, with each $w$ labelling an edge that runs
directly from $s$ to $s\elt{w}$. Consider the intersection of the
paths with a given $x$-row, at points $p$ and $q$. Let the
intersections with the next $x$-row be $p'$ and $q'$. Let $w$ and $z$
be the labels on the segments that run between $p$ and $p'$ and $q$
and $q'$. (These segments may of course start \emph{below} $p$ and $q$
and end \emph{above} $p'$ and $q'$.) By Lemma~\ref{lem:critval} above,
if the horizontal distance between $p$ and $q$ exceeds
$\alpha_{g,K'}$, then the distance between $p'$ and $q'$ also exceeds
$\alpha_{g,K'}$, regardless of $w$ and $z$. (Recall that $g$ is the
maximum number of letters of $x$ in $\elt{a}$ for $a \in A$.)
Therefore, since the paths labelled by $u''$ and $v''$ must eventually
be a horizontal distance at most $g$ apart, the horizontal distance
between their intersections with each $x$-row cannot exceed this
critical value $\alpha_{g,X}$.

In particular, the points $\elt{u'}$ and $\elt{v'}$ can only be a
bounded distance apart. Therefore $|u'|$ and $|v'|$ can only differ by
a bounded amount.

An argument similar to that for $J^*$ shows that if $w$ is a prefix of
a word in $(K')^*$, then the length of $w$ differs from a constant
multiple of the number of letters $y$ in $\elt{w}$ by only a bounded
amount.

Suppose now that $t \geq \min\{|u'|,|v'|\}$. Consider the elements
$\elt{u(t)}$ and $\elt{v(t)}$. By altering $t$ by a bounded amount,
assume $v(t) \in v'(K')^*$. By the observation in the last paragraph,
the [new] elements $\elt{u(t)}$ and $\elt{v(t)}$ lie on $x$-rows that
are only a bounded number of elements $y$ apart. Therefore
$\elt{u(t)}$ is a bounded distance from the intersection $p$ of the
subpath labelled by $u''$ with the $x$-row containing
$\elt{v(t)}$. The horizontal distance between $p$ and $\elt{v(t)}$
cannot exceed $\alpha_{h,X}$. Therefore the distance between
$\elt{u(t)}$ and $\elt{v(t)}$ in the Cayley graph of $\BSS(m,n)$ is
bounded. Restoring the original value for $t$ does not alter this
fact. 

By Theorem~\ref{thm:autsgcharbyfellow}, $(A \cup \{1\},L)$ is an
automatic structure for $\adjone{S}$. The semigroup $S$ is therefore
automatic by Proposition~\ref{prop:adjoinid}. Since $S$ was an
arbitrary finitely generated subsemigroup of $\BSS(m,n)$, this proves
the following result.

\begin{theorem}
\label{thm:one}
Every finitely generated subsemigroup of the Baumslag--Solitar
semigroup $\BSS(m,n)$, where $m > n$, is automatic.
\end{theorem}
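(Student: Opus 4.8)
The plan is to exhibit an explicit automatic structure and verify it through the fellow-traveller criterion of Theorem~\ref{thm:autsgcharbyfellow}, which applies because $S$, being a subsemigroup of the group-embeddable semigroup $\BSS(m,n)$, is itself group-embeddable. Since automaticity is unaffected by adjoining an identity, it suffices to build an automatic structure for the monoid $\adjone{S}$; automaticity of $S$ then follows by Proposition~\ref{prop:adjoinid}, and the freedom to use a fresh identity symbol $1$ provides convenient padding. Throughout I would work inside a single branch of the Cayley graph of $\BSS(m,n)$ embedded into the Euclidean plane, so that each generator $a$ determines a fixed angle $\theta_a$ with the horizontal, independent of base point, with $\theta_a = 0$ exactly for the letters representing powers of $x$.

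First I would choose the language. The defining relation lets one push powers of $x$ to the \emph{left}, so every element of $S$ has a representative in $(A')^*K^*$, where $A'$ collects the generators with $\theta_a = 0$ and $K$ is a suitable finite set of words each beginning with a letter of $A'' = A - A'$. To make word length track geometry I would pad with the symbol $1$: a language $J^*$ whose words have length equal to their horizontal displacement, and a language $(K')^*$ whose words have length proportional to the number of symbols $y$ they contain, hence to the vertical height climbed. Setting $L = J^*(K')^* - \{\emptyword\} \cup \{1\}$ yields a regular language mapping onto $\adjone{S}$ on which the word-position parameter $t$ is a faithful proxy for geometric position.

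It then remains to bound, uniformly in $t$, the Cayley-graph distance between $\elt{u(t)}$ and $\elt{v(t)}$ whenever $u,v \in L$ and $\elt{ua} = \elt{v}$. I would split at $t = \min\{|u'|,|v'|\}$, where $u = u'u''$ and $v = v'v''$ with $u',v' \in J^*$. For small $t$ both prefixes run along the bottom $x$-row, and the length-equals-displacement property of $J^*$ bounds the gap at once. For large $t$ the two paths climb, and here the padding converts $t$ into a height, so $\elt{u(t)}$ and $\elt{v(t)}$ sit within a bounded number of $x$-rows of each other; the remaining task is to bound their \emph{horizontal} separation.

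The main obstacle is exactly this horizontal control during the climb, and it is where Lemma~\ref{lem:critval} does the essential work. Because $m > n$, the factor $m/n > 1$ in \eqref{eq:hordist} forces any horizontal gap exceeding the critical value $\alpha_{g,K'}$ to grow as one ascends. Since the two climbing paths terminate within horizontal distance $g = \max\{\gamma_a\}$ of the common endpoint $\elt{ua} = \elt{v}$, a gap above the critical value at any $x$-row would compel an even larger gap higher up, contradicting the bounded terminal separation; hence the horizontal gap never exceeds $\alpha_{g,K'}$. Combining the bounded height offset with this bounded horizontal separation bounds $d(\elt{u(t)},\elt{v(t)})$ independently of $u$, $v$, $a$, and $t$, so Theorem~\ref{thm:autsgcharbyfellow} supplies an automatic structure for $\adjone{S}$ and the result follows.
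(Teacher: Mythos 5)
Your proposal is correct and follows essentially the same route as the paper's own proof: the same padded language $L = J^*(K')^* - \{\emptyword\} \cup \{1\}$ over $A \cup \{1\}$, the same split of the fellow-traveller verification at $t = \min\{|u'|,|v'|\}$, and the same use of Lemma~\ref{lem:critval} (gaps above the critical value must grow as one climbs, contradicting the bounded separation $g$ at the common endpoint), concluding via Theorem~\ref{thm:autsgcharbyfellow} and Proposition~\ref{prop:adjoinid}.
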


\section{All subsemigroups left-au\-to\-mat\-ic when $m < n$}

Theorem~\ref{thm:one} has the following left-handed version:

\begin{theorem}
\label{thm:two}
Every finitely generated subsemigroup of the Baumslag--Solitar
semigroup $\BSS(m,n)$, where $m < n$, is left-au\-to\-mat\-ic.
\end{theorem}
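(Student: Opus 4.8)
The plan is to deduce Theorem~\ref{thm:two} from Theorem~\ref{thm:one} by exploiting the duality between left- and right-automaticity encoded in the word-reversal operation. The key observation is that reversing the defining relation of a Baumslag--Solitar semigroup interchanges the roles of $m$ and $n$. Concretely, the semigroup $\BSS(m,n) = \sgpres{x,y}{(yx^m,x^ny)}$ has reverse $\BSS(m,n)^\rev = \sgpres{x,y}{((yx^m)^\rev,(x^ny)^\rev)} = \sgpres{x,y}{(x^my,yx^n)}$. After relabelling this relation so that it is written in the standard form $(yx^{n},x^{m}y)$ (swapping the left- and right-hand sides, which does not change the semigroup, and reading off the exponents), one sees that $\BSS(m,n)^\rev \cong \BSS(n,m)$. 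Thus reversal sends $\BSS(m,n)$ to $\BSS(n,m)$.

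With this identification in hand, the argument runs as follows. First I would fix $m < n$ and let $T$ be an arbitrary finitely generated subsemigroup of $\BSS(m,n)$. Taking reverses, $T^\rev$ is a finitely generated subsemigroup of $\BSS(m,n)^\rev \cong \BSS(n,m)$. Since $m < n$ gives $n > m$, Theorem~\ref{thm:one} applies to $\BSS(n,m)$ and shows that $T^\rev$ is (right-)automatic. Finally, by Lemma~\ref{lem:revaut}, a semigroup is right-automatic if and only if its reverse is left-automatic; applying this to $T^\rev$ and using $(T^\rev)^\rev = T$ yields that $T = (T^\rev)^\rev$ is left-automatic. As $T$ was arbitrary, every finitely generated subsemigroup of $\BSS(m,n)$ with $m < n$ is left-automatic, which is exactly the statement of the theorem.

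The main point requiring care is the verification that reversal genuinely exchanges the two exponents, i.e.\ that $\BSS(m,n)^\rev$ is isomorphic (as a semigroup, on the same generating alphabet) to $\BSS(n,m)$. This is where one must be slightly attentive: the reversed relation is $(x^my,yx^n)$, and one must check that swapping sides and comparing with the template $(yx^{n'},x^{m'}y)$ forces $n'=n$ and $m'=m$ but in the \emph{opposite} roles, so that the resulting presentation is that of $\BSS(n,m)$ rather than $\BSS(m,n)$. Once this bookkeeping is confirmed, the remaining steps are purely formal invocations of the already-established Theorem~\ref{thm:one} and Lemma~\ref{lem:revaut}, together with the elementary fact that taking the reverse of a semigroup commutes with passing to finitely generated subsemigroups (a subsemigroup generated by $\elt{A}$ reverses to the subsemigroup generated by the reversed generators). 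I do not expect any genuine difficulty here; the content of Theorem~\ref{thm:two} lies entirely in Theorem~\ref{thm:one}, and this proof is essentially a one-line corollary dressed in the reversal formalism.
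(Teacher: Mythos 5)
Your proof is correct and is essentially identical to the paper's: both identify $\BSS(m,n)^\rev$ with $\BSS(n,m)$ via reversal of the defining relation, apply Theorem~\ref{thm:one} to the reversed subsemigroup, and conclude via Lemma~\ref{lem:revaut}. The extra bookkeeping you flag (that swapping the two sides of the reversed relation is harmless, and that reversal commutes with passing to finitely generated subsemigroups) is exactly what the paper leaves implicit, and you have verified it correctly.
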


\begin{proof}
Note first of all that
\[
\BSS(m,n)^\rev = \sgpres{x,y}{(yx^m,x^ny)}^\rev = \sgpres{x,y}{(x^my,yx^n)} = \BSS(n,m).
\]
Let $S$ be a finitely generated subsemigroup of $\BSS(m,n)$. Then
$S^\rev$ is a subsemigroup of $\BSS(n,m)$ and is therefore
right-au\-to\-mat\-ic by Theorem~\ref{thm:one}. So $S$ itself is
left-au\-to\-mat\-ic by Lemma~\ref{lem:revaut}.
\end{proof}

\section{Non-au\-to\-mat\-ic subsemigroups when $m=n$}
\label{sec:bssmmcontnonaut}

This section shows that the Baumslag--Solitar semigroup $\BSS(m,m)$,
where $m \geq 2$ contains finitely generated subsemigroups that are
neither right- not left-au\-to\-mat\-ic. First of all,
Proposition~\ref{prop:bssmmcontfxn} shows that $\BSS(m,m)$ contains
the direct product of a free semigroup of rank $m$ and the natural
numbers. Example~\ref{ex:ssgofdpfsnnaut} then explicitly exhibits a
finitely generated non-au\-to\-mat\-ic subsemigroup of this direct product.

\begin{proposition}
\label{prop:bssmmcontfxn}
The Baumslag--Solitar semigroup $\BSS(m,m)$ contains the semigroup
$\{p_1,\ldots,p_m\}^* \times (\nset \cup \{0\}) - \{(\emptyword,
0)\}$: the free product of the free monoid on $m$ letters and the
natural numbers {\rm(}including zero\/{\rm)} with the identity
$(\emptyword, 0)$ removed.
\end{proposition}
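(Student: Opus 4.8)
The plan is to exploit the fact that the element $\elt{x^m}$ is \emph{central} in $\BSS(m,m)$. Indeed, $\elt{x^m}\,\elt{x} = \elt{x}\,\elt{x^m}$ holds trivially, and $\elt{x^m}\,\elt{y} = \elt{y}\,\elt{x^m}$ is exactly the defining relation $(yx^m, x^my)$; hence $z = \elt{x^m}$ commutes with both generators and so with every element of $\BSS(m,m)$. This central element will supply the $\nset \cup \{0\}$ factor, while $m$ elements each carrying a single letter $y$ will supply the free monoid factor.

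Concretely, I would set $z = \elt{x^m}$ and $p_i = \elt{x^{i-1}y}$ for $i = 1, \ldots, m$, and define a map $\phi$ from $\{p_1,\ldots,p_m\}^* \times (\nset \cup \{0\}) - \{(\emptyword,0)\}$ into $\BSS(m,m)$ by $\phi(w, k) = \elt{w}\,z^k$, where $\elt{w}$ denotes the product in $\BSS(m,m)$ of the letters of $w$ (with $\elt{\emptyword}$ the empty product, so that $\phi(\emptyword,k) = z^k$). First I would check that $\phi$ is well defined: the only pair mapping to the empty word is $(\emptyword,0)$, which has been deleted, so every pair in the domain maps to a genuine nonempty element. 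Then I would verify that $\phi$ is a homomorphism; since multiplication in the direct product is $(w_1, k_1)(w_2, k_2) = (w_1 w_2,\, k_1 + k_2)$, the homomorphism property reduces to the identity $z^{k_1}\elt{w_2} = \elt{w_2}\,z^{k_1}$, which is immediate from the centrality of $z$.

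The heart of the argument, and the step I expect to be the main obstacle, is \emph{injectivity}. Here I would appeal to the uniqueness of the normal form $x^{k_0}yx^{k_1}y\cdots yx^{k_j}yx^l$ with each $k_i < m$. The key computation is that
\[
\phi\big(p_{i_1}\cdots p_{i_r},\, k\big) = \elt{x^{i_1-1}y\,x^{i_2-1}y\cdots x^{i_r-1}y\,x^{mk}},
\]
and the displayed word is \emph{already in normal form}: every block of $x$'s immediately preceding a $y$ has length $i_t - 1 < m$, while the trailing block $x^{mk}$ is unconstrained (and always a multiple of $m$). Thus from the normal form of an image one reads off $r$ as the number of letters $y$, then $i_1, \ldots, i_r$ from the interior $x$-blocks, and finally $k = l/m$ from the trailing block. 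The case $r = 0$ is treated separately: $\phi(\emptyword, k) = \elt{x^{mk}}$ contains no letter $y$, these are distinct for distinct $k \geq 1$, and so they are distinguished both from one another and from every image with $r \geq 1$. This recovers $(w,k)$ uniquely, so $\phi$ is an embedding, which proves the proposition. As a by-product, restricting to $k = 0$ shows that $p_1, \ldots, p_m$ generate a free monoid of rank $m$, as required.
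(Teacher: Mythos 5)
Your proposal is correct and is essentially the paper's own argument: the paper uses the identical generators $\elt{p_i} = x^{i-1}y$ and $\elt{r} = x^m$, verifies the commutation relations $(p_i r, r p_i)$, and deduces the isomorphism from the uniqueness of the normal forms in $\{p_1,\ldots,p_m\}^*r^* - \{\emptyword\}$, read off from the $\BSS(m,m)$ normal form exactly as you do. The only difference is packaging --- the paper phrases the normal-form argument as establishing the presentation $\sgpres{A}{(p_ir,rp_i)}$ for the subsemigroup, whereas you directly construct the embedding $\phi$ and prove injectivity --- but the mathematical content (choice of generators, centrality of $x^m$, and the normal-form uniqueness) is the same.
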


\begin{proof}
Let $A = \{p_1,\ldots,p_m,r\}$ be an alphabet representing elements of
$\BSS(m,m)$ as follows
\[
\elt{p_i} = x^{i-1}y\text{ for each $i$, and }\elt{r} = x^m.
\]
Let $S$ be the subsemigroup of $\BSS(m,m)$ generated by $\elt{A}$. The
aim is to show that $S$ is presented by
\[
\sgpres{A}{(p_ir,rp_i) \text{ for all $i$}}.
\]

To prove this, note firstly that every relation $(p_ir,rp_i)$ holds in
$S$. Define a set of normal forms $N = \{p_1,\ldots,p_m\}^*r^* -
\{\emptyword\}$. Every element of $S$ has such a normal form: letters $r$
can be moved to the left of all letters $p_i$ using the defining
relations. Consider any element of $s \in S$. Suppose first that $s$
contains some letter $y$. If $s$ begins $x^{i-1}y\cdots$, then any
word in $N$ representing it must begin $p_i\cdots$. On the other hand,
if $s$ contains no letters $y$, then $s = x^{m\alpha}$ for some
$\alpha \in \nset$ and the normal form word representing it is
$r^\alpha$. In the first case, one can cancel the $x^{i-1}y$ and
iterate this reasoning to obtain the entire normal form word
representing $s$. Thus $N$ is a set of unique normal forms for $S$,
and so $S$ has the given presentation.

Thus $S$ is isomorphic to $\{p_1,\ldots,p_m\}^* \times (\nset \cup
\{0\}) - \{(\emptyword, 0)\}$.
\end{proof}

\begin{example}
\label{ex:ssgofdpfsnnaut}
Let $A = \{a,b,c,d,e,f,g,h\}$ be an alphabet representing elements of
$\{x,y,p,q,r\}^* \times (\nset\cup\{0\})$ as follows:
\begin{align*}
\elt{a} &= (x^2p,0),\\
\elt{b} &= (qrp,1), & \elt{f} &= (x^2pq,0),\\
\elt{c} &= (qr,0), & \elt{g} &= (rpqrpq,3),\\
\elt{d} &= (pqr,2), & \elt{h} &= (rpy^2,0).\\
\elt{e} &= (py^2,0),
\end{align*}
Let $S$ be the semigroup generated by $\elt{A}$. 
\end{example}

Notice that, since any free semigroup of finite rank embeds into the
free semigroup of rank $2$, which in turn embeds into any free
semigroup of rank $m \geq 2$, the product $\{x,y,p,q,r\}^* \times
(\nset\cup\{0\})$ embeds into $\{p_1,\ldots,p_m\}^* \times
(\nset\cup\{0\})$. So this semigroup $S$ embeds into any
Baumslag--Solitar semigroup $\BSS(m,m)$ with $m \geq 2$ by
Proposition~\ref{prop:bssmmcontfxn}.

\begin{proposition}
The semigroup $S$ is neither right-au\-to\-mat\-ic nor left-au\-to\-mat\-ic.
\end{proposition}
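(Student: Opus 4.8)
The plan is to show that $S$ fails the fellow-traveller characterization of Theorem~\ref{thm:autsgcharbyfellow} in both its right- and left-handed forms. Since $S$ is a subsemigroup of a group-embeddable semigroup, it is itself group-embeddable and hence cancellative, so the theorem applies to any rational structure. The strategy is to identify, for an arbitrary purported automatic structure $(B,L)$, two families of words $u_N, v_N \in L$ whose products differ by a single generator $\elt{b}$ but whose prefixes $\elt{u_N(t)}$ and $\elt{v_N(t)}$ drift arbitrarily far apart as $N \to \infty$, contradicting the existence of a fellow-travelling constant $\lambda$.

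The key structural feature I would exploit is the direct-product decomposition of the ambient semigroup into a free-semigroup component $\{x,y,p,q,r\}^*$ and a commutative $(\nset \cup \{0\})$ component. The carefully chosen generators $a,\ldots,h$ are designed so that certain products force the free component and the numerical component to grow at mismatched rates along two different words representing (nearly) the same element. Concretely, I would look for relations among the $\elt{a},\ldots,\elt{h}$ --- for instance products where the first coordinate builds up a long common string in the free semigroup while the second coordinate (the $\nset$-component) is forced to track a quantity that only one of the two competing words can ``pay for'' cheaply. The generators that contribute to the numerical coordinate ($b,d,g$, with values $1,2,3$) versus those that are numerically free ($a,c,e,f,h$) are the levers: by pairing words that agree on the free-semigroup coordinate but reach the required $\nset$-value via different numbers of letters, one creates prefixes that are synchronized in the free component but displaced in the numerical component, or vice versa. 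Because the two coordinates live in a \emph{direct product}, displacement in either coordinate forces the Cayley-graph distance to grow without bound, since the only way to correct a discrepancy in the free-semigroup coordinate is to traverse that many edges, and similarly for $\nset$.

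The core combinatorial computation will be to verify that the two chosen families $\{u_N\}$ and $\{v_N\}$ genuinely lie in whatever language $L$ maps onto $S$ --- or, more robustly, to argue directly that \emph{no} $L$ can work by showing that representatives of the relevant elements are forced (up to bounded perturbation) to have the mismatched-prefix behaviour. Here I would invoke Proposition~\ref{prop:changegens} to reduce to the fixed generating set $A$, and Proposition~\ref{prop:composition} together with the regularity of the languages $B_a$ to extract a pumping-style contradiction: a regular relation cannot sustain the linear drift between prefix-endpoints that the direct-product geometry imposes. For the left-automatic case I expect the symmetric argument to go through by examining \emph{suffixes} rather than prefixes, using the same decomposition but with the roles of the generators reversed; this is where the apparent redundancy among the generators (several encode overlapping free-semigroup syllables like $pq$, $qr$, $rp$) becomes essential, since left-multiplication and right-multiplication interact with the free component from opposite ends.

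The hard part will be pinning down the exact two families of words and the single generator $\elt{b}$ witnessing non-fellow-travelling, and then certifying that the drift is genuinely unbounded rather than correctable by a bounded detour in the Cayley graph. The subtlety is that the free-semigroup and $\nset$ coordinates can sometimes be adjusted independently, so I must ensure the construction forces a discrepancy that \emph{cannot} be absorbed in either coordinate simultaneously --- this is precisely what the specific numerical values $1,2,3$ and the specific syllable overlaps among $a,\ldots,h$ are engineered to guarantee. Getting the bookkeeping right so that one coordinate is pinned while the other runs away, in both the left and right settings, is the delicate step; the regular-language contradiction itself is then routine.
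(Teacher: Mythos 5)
Your strategic skeleton does line up with the paper's proof in spirit: reduce to the fixed generating set via Proposition~\ref{prop:changegens}, exploit the mismatch between the free-semigroup coordinate and the $\nset$-coordinate (you even correctly single out the values $1,2,3$ of $b,d,g$ as the levers), use Proposition~\ref{prop:composition} plus regularity to run a pumping argument, and handle the left-automatic case by a parallel argument. But the proposal has a genuine gap: every step that constitutes the actual proof is deferred rather than carried out. What is missing is precisely (i) the concrete witness: $\elt{ab^\alpha cd^\alpha e} = (x^2(pqr)^{2\alpha+1}py^2,3\alpha) = \elt{fg^\alpha h}$ for all $\alpha$; (ii) the uniqueness claim that makes ``representatives are forced'' true --- namely that $\elt{ab^\alpha cd^\alpha}$ and $\elt{fg^\alpha}$ each have a \emph{unique} word over $A$ representing them (proved by a short computation balancing the free coordinate, which gives $\beta+\gamma = 2\alpha$, against the $\nset$-coordinate, which gives $\beta+2\gamma = 3\alpha$), so that any $L$-representative is that word with symbols $1$ inserted; and (iii) the closing arithmetic clash: pumping the segment between two repeated states (located after letters $b$) inflates the $b$-count by $\gamma-\beta$ while inflating the $g$-count by $\eta$, and matching coordinates of $\elt{we}=\elt{zh}$ then forces $2\eta = \gamma-\beta$ (free coordinate) and $3\eta = \gamma-\beta$ ($\nset$-coordinate) simultaneously, whence $\gamma = \beta$, a contradiction. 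Saying that the generators ``are engineered to guarantee'' this incompatibility is exactly the statement that needs proof; without items (i)--(iii) there is no proof.

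There is also a concrete inaccuracy in your fellow-traveller framing. You propose families $u_N, v_N \in L$ ``whose products differ by a single generator,'' but no such pair exists here: the two families meet only at $\elt{u_\alpha e} = \elt{v_\alpha h}$, i.e.\ with a generator appended on \emph{each} side, so Theorem~\ref{thm:autsgcharbyfellow} cannot be applied directly to the pair $(u_\alpha, v_\alpha)$. One must either form the composed relation $L_e \circ L_h^{-1} = \{(u,v)\delta_C : u,v \in L,\ \elt{ue}=\elt{vh}\}$ and pump it (the paper's route, which needs no distance estimates at all --- the contradiction is outright failure of the element equality), or else route a fellow-traveller argument through a third $L$-word representing the common element and use a triangle inequality, together with a lower bound on Cayley-graph distance coming from the group $F(x,y,p,q,r)\times\zset$. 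Your proposal wavers between these two mechanisms without committing to, or completing, either; the second one is workable but is not the paper's argument and still requires the uniqueness claim (ii) to pin down what the words of $L$ can look like.
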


\begin{proof}\relax
First, it will be shown that $S$ is not right-au\-to\-mat\-ic. [The proof is
  similar in spirit to that of \cite[Proposition~3.1]{c_nonaut}.] The
proof that $S$ is not left-au\-to\-mat\-ic is essentially parallel; this
will be discussed briefly afterwards.

First of all, notice that for each $\alpha \in \nset$,
\[
\elt{ab^\alpha cd^\alpha e} = (x^2(pqr)^{2\alpha+1}py^2,3\alpha) = \elt{fg^\alpha h}.
\]
Elementary reasoning shows that for each $\alpha \in \nset \cup \{0\}$
the elements
\[
\elt{ab^\alpha cd^\alpha} = (x^2(pqr)^{2\alpha+1},3\alpha), \qquad \elt{fg^\alpha} = (x^2(pqr)^{2\alpha}pq,3\alpha)
\]
have unique representives $ab^\alpha cd^\alpha$ and $fg^\alpha$ over
the alphabet $A$. [The more complicated case is
  $(x^2(pqr)^{2\alpha+1},3\alpha)$: considering the free semigroup
  component shows that any word representing this element must be of
  the form $ab^\beta cd^\gamma$ with $\beta + \gamma = 2\alpha$. The
  $\nset$-component requires that $\beta + 2\gamma =
  3\alpha$. Together this forces $\beta = \gamma = \alpha$.]

Suppose, with the aim of obtaining a contradiction, that $S$ is
automatic. Then, by Proposition~\ref{prop:adjoinid}, so is
$S^1$. Proposition~\ref{prop:changegens} implies that $S^1$ has an
automatic structure $(C,L)$, where $C = A \cup \{1\}$. (The new symbol
$1$ represents the adjoined identity of $S^1$.) Let $\phi : C^* \to
A^*$ map $w \in C^*$ to the word over $A$ formed by deleting any
symbols $1$ from $w$. Obviously $\elt{w\phi} = \elt{w}$.

Proposition~\ref{prop:composition} shows that the language
\begin{align*}
L_e \circ L_h^{-1} &= \{(u,w)\delta_C : u,w \in L, \elt{ue} = \elt{w}\} \circ \{(w,v)\delta_C : w,v \in L, \elt{vh} = \elt{w}\}\\
&= \{(u,v)\delta_C : u,v \in L, \elt{ue} = \elt{vh}\}
\end{align*}
is regular. Let $N$ be the number of states in a finite state automaton $\fsa{A}$ recognizing $L_e \circ L_h^{-1}$.

For each $\alpha \in \nset \cup \{0\}$, let $u_\alpha$ and $v_\alpha$
be representatives in $L$ of the elements $\elt{ab^\alpha cd^\alpha}$
and $\elt{fg^\alpha}$, respectively. Since $\elt{ab^\alpha cd^\alpha}$
has a unique representative over $A$, it is clear that $u_\alpha\phi =
ab^\alpha cd^\alpha$. Similarly, $v_\alpha\phi = fg^\alpha$. (So
$u_\alpha$ and $v_\alpha$ are the words $ab^\alpha cd^\alpha$ and
$fg^\alpha$ with some symbols $1$ possibly inserted.) By its
definition, the language $L_e \circ L_h^{-1}$ contains
$(u_\alpha,v_\alpha)\delta_C$ for all $\alpha \in \nset \cup \{0\}$.

Fix $\alpha > N$. Consider the automaton $\fsa{A}$ reading
$(u_\alpha,v_\alpha)\delta_C$, and the states it enters immediately
after reading each of the letters $b$ from the word $u_\alpha$. Since
the number of letters $b$ exceeds $N$, the automaton enters the same
state after reading two different letters $b$. Let $u'$ and $u'u''$ be
the prefixes of $u_\alpha$ up to and including these two different
letters $b$. That is, $u'\phi = ab^\beta$, $(u'u'')\phi = ab^\gamma$, for some $\beta,\gamma \in \nset$ with $\gamma > \beta$. Let $v'$ and $v'v''$ be
prefixes of $v_\alpha$ of the same lengths as $u'$ and $u'u''$,
respectively. The subword $v''$ is such that $v''\phi = fg^\eta$ or
$v''\phi = g^\eta$ for some $\eta \in \nset \cup \{0\}$. (The former
possibility arises because $v'$ may be a string of symbols~$1$.) So pumping $(u'',v'')\delta_C$ shows that there is $(w,z)\delta_C \in L_e \circ L_h^{-1}$ with either
\[
w\phi = ab^\beta b^{2(\gamma - \beta)}b^{\alpha-\gamma}cd^\alpha \text{ and } z\phi = fg^\eta fg^\alpha,
\]
or
\[
w\phi = ab^\beta b^{2(\gamma - \beta)}b^{\alpha-\gamma}cd^\alpha \text{ and } z\phi = fg^{\alpha+\eta}.
\]
In both cases, by the definition of $L_e \circ L_h^{-1}$, it follows that $\elt{we} = \elt{zh}$. In the former case, this implies that
\[
(x^2(pqr)^{2\alpha+\gamma-\beta+1}py^2,3\alpha+\gamma-\beta) = (x^2(pqr)^{2\eta}pq x^2(pqr)^{2\alpha + 1}py^2,3\alpha+3\eta);
\]
in the latter, that
\[
(x^2(pqr)^{2\alpha+\gamma-\beta+1}py^2,3\alpha+\gamma-\beta) = (x^2(pqr)^{2\alpha+2\eta+1}py^2,3\alpha+3\eta).
\]
In the former case, the free semigroup components do not match, which
is a contradiction. In the latter, for the free semigroup components
to match, $2\eta = \gamma - \beta$. But for the $\nset$-components to
match, $3\eta =\gamma-\beta$, which forces $\gamma-\beta = 0$,
contradicting $\gamma > \beta$. So both cases lead to a contradiction:
hence $S$ cannot be automatic.

To see that $S$ is not left-au\-to\-mat\-ic, proceed in the same way, but
use elements $\elt{b^\alpha cd^\alpha e}$ and $\elt{g^\alpha h}$,
and the regular language ${}_aL \circ {}_fL^{-1}$.
\end{proof}

\section{Appendix: Malcev presentations}
\label{sec:malcev}

Malcev presentations are a special type of semigroup presentation for
semigroups embeddable into groups. Informally, a Malcev presentation
defines a semigroup by means of generators, defining relations, and a
rule of group-embedability. This rule of group-embeddability is worth
an infinite number of defining relations, in the sense that a
semigroup can admit a finite Malcev presentation but no finite
ordinary presentation. The present section discusses what implications
Theorems~\ref{thm:one} and~\ref{thm:two} and
Example~\ref{ex:ssgofdpfsnnaut} have, in the light of
Theorem~\ref{thm:autagfinmalpres} below, for the theory of Malcev
presentations.

Malcev presentations were introduced by Spehner in 1977
\cite{spehner_presentations}, though they are based on Malcev's
necessary and sufficient condition for the embeddability of a
semigroup in a group \cite{malcev_immersion1} (see
also~\cite[Chapter~12]{clifford_semigroups2}). The theory of Malcev
presentations was relatively inactive until recent work by the present
author and collaborators
\cite{c_phdthesis,c_ssgofdp,crr_ssgofg,crr_vfree}; see also the survey
article~\cite{c_malcev}.

\begin{definition}
Let $S$ be any semigroup. A congruence $\sigma$ on $S$ is a
\defterm{Malcev congruence} if $S/\sigma$ is embeddable in a group.
\end{definition}

If $\{\sigma_i : i \in I\}$ is a set of Malcev congruences on $S$,
then $\sigma = \bigcap_{i \in I} \sigma_i$ is also a Malcev congruence
on $S$. This is true because $S/\sigma_i$ embeds in a group $G_i$ for
each $i \in I$, so $S/\sigma$ embeds in $\prod_{i \in I} S/\sigma_i$,
which in turn embeds in $\prod_{i \in I} G_i$. The following
definition therefore makes sense.

\begin{definition}
\label{def:malpres}
Let $A^+$ be a free semigroup; let $\rho \subseteq A^+ \times A^+$ be
any binary relation on $A^+$. Denote by $\malcgen{\rho}$ the smallest
Malcev congruence containing $\rho$ --- namely,
\[
\malcgen{\rho} = \bigcap \left\{\sigma : \sigma \supseteq \rho, \text{ $\sigma$ is a Malcev congruence on }A^+\right\}.
\]
Then $\sgmpres{A}{\rho}$ is a \defterm{Malcev presentation} for [any
  semigroup isomorphic to] $A^+\!/\malcgen{\rho}$.  If both $A$ and
$\rho$ are finite, the the Malcev presentation $\sgmpres{A}{\rho}$ is
said to be finite.

A group-em\-bed\-dable semigroup is \defterm{Malcev coherent} if all of
its finitely generated subsemigroups admit finite Malcev
presentations.
\end{definition}

Several classes of Malcev coherent semigroups are known: virtually
free groups \cite[Theorem~3]{crr_vfree}, virtually nilpotent groups
\cite[Theorem~1]{crr_ssgofg}, direct products of virtually free and
abelian groups \cite[Theorem~2]{c_ssgofdp}, and free products of free
monoids and abelian groups \cite[Theorem~6]{crr_ssgofg}. [Each of
  these classes contain finitely generated semigroups that do not
  admit finite \emph{ordinary} presentations; for this reason, their
  Malcev coherence is of interest. See \cite[Table~3]{c_malcev} for a
  list of semigroups known to be Malcev coherent or not Malcev
  coherent.]

\begin{theorem}[{\cite[Theorem~2]{crr_ssgofg}}]
\label{thm:autagfinmalpres}
Every right- or left-au\-to\-mat\-ic group-em\-bed\-dable semigroup admits a
finite Malcev presentation.
\end{theorem}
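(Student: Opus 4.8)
The plan is to reduce to the right-automatic case and then pass through the universal group of $S$. Suppose first that $S$ is left-automatic. Reversal is compatible with group-embeddability---the reverse of a group is its opposite group---so a congruence on a free semigroup is Malcev precisely when its reverse is, and $\sgmpres{A}{\rho^\rev}$ defines $(\sgmpres{A}{\rho})^\rev$. By Lemma~\ref{lem:revaut} the semigroup $S^\rev$ is right-automatic, and it is again group-embeddable; so if $S^\rev$ admits a finite Malcev presentation $\sgmpres{A}{\rho}$, then $\sgmpres{A}{\rho^\rev}$ is a finite Malcev presentation for $S$. It therefore suffices to treat the right-automatic case.

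So assume $S$ is right-automatic and group-embeddable, with automatic structure $(A,L)$. Being group-embeddable, $S$ is cancellative; let $G$ be its group of fractions, equivalently its universal group. The first substantive step is to show that $G$ is an automatic group. Here one uses the fellow-traveller characterization of Theorem~\ref{thm:autsgcharbyfellow}: the bounded-distance property of $L$ in $\Gamma(S,A)$ should lift to a language of representatives for $G$ over $A \cup A^{-1}$, yielding exactly the fellow-traveller property that defines automaticity for groups. An automatic group is finitely presented, so $G = \gpres{A}{Q}$ for some finite set $Q$ of relators.

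Next I would convert this finite group presentation into a finite set of \emph{positive} relations defining the same group. Each relator of $Q$ is an equality $\elt{u} = \elt{v}$ of words over $A \cup A^{-1}$; since $S$ embeds in $G$ and every element of $G$ is a fraction of elements of $\elt{A^+}$, one may multiply through by suitable positive words to clear all inverse letters, producing an equality $\elt{p} = \elt{q}$ with $p,q \in A^+$ that holds in $S$ and from which the original relator is derivable in the free group on $A$. Collecting these gives a finite $R_0 \subseteq A^+ \times A^+$ of relations valid in $S$ with $\gpres{A}{R_0} = G$. To finish, I would invoke the structural fact that a group-embeddable semigroup embeds in its universal group, together with the observation that the universal group of $\sgmpres{A}{R_0}$ depends only on the relations and equals $\gpres{A}{R_0} = G$. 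Hence $\sgmpres{A}{R_0} = A^+/\malcgen{R_0}$ embeds in $G$, sending the class of $a$ to $\elt{a}$, and its image is the subsemigroup of $G$ generated by $\elt{A}$, which is exactly $S$. Thus $\sgmpres{A}{R_0}$ is a finite Malcev presentation for $S$.

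I expect the main obstacle to be the first substantive step: proving that the group of fractions $G$ is automatic, and so finitely presented. Transferring the fellow-traveller property from $S$ to $G$---in particular controlling representatives for the inverses of elements of $\elt{A^+}$---is where the genuine work lies. By comparison, clearing denominators to obtain $R_0$ and identifying $\sgmpres{A}{R_0}$ with $S$ through the universal group are largely bookkeeping, once the embedding of a group-embeddable semigroup into its universal group is in hand.
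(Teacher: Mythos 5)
Your reduction of the left-automatic case to the right-automatic case via reversal is sound (reversal preserves group-embeddability and Malcev congruences, and Lemma~\ref{lem:revaut} applies), but the right-automatic case fails at the step you yourself identify as the main one. It is false in general that the universal group of a right-automatic group-embeddable semigroup is automatic, and the counterexamples are the central objects of this very paper: for $m > n$ the semigroup $\BSS(m,n)$ is automatic (Theorem~\ref{thm:one}, or Hoffmann's earlier result) and embeds in a group by Adyan's theorem, and its universal group is precisely $\BSG(m,n)$ --- the group with the same presentation --- which, as the introduction recalls from \cite[Example~7.4.1]{epstein_wordproc}, is automatic if and only if $m = n$. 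So for $S = \BSS(2,1)$ the group $G$ of your argument admits no automatic structure at all; there is nothing to which the fellow-traveller property of $(A,L)$ could be transferred, and the step cannot be repaired by handling representatives of inverses more carefully. (A smaller issue: ``group of fractions'' presupposes an Ore condition that a group-embeddable semigroup need not satisfy; the universal group is the right object. But this is cosmetic by comparison.)

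Weakening the claim to ``$G$ is finitely presented'' --- which is all your later steps actually use, and which happens to hold in the example above since $\BSG(2,1)$ is a one-relator group --- does not rescue the proof, because for a finitely generated group-embeddable semigroup $S$ the statements ``$S$ admits a finite Malcev presentation'' and ``the universal group of $S$ is finitely presented'' are equivalent: one direction is the standard fact that the universal group of $\sgmpres{A}{R}$ is $\gpres{A}{R}$, and the other is exactly your (essentially correct) finite-subpresentation argument, since $\sgmpres{A}{R_0}$ is the image of $A^+$ in $\gpres{A}{R_0}$. So your plan reduces the theorem to an equivalent assertion, and the only argument offered for that assertion is the false automaticity claim. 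Note also that this paper does not itself prove the statement; it quotes it from \cite[Theorem~2]{crr_ssgofg}. The proof there runs in the opposite direction to yours: it works directly with the automatic structure for $S$, using the fellow-traveller property (quoted here as Theorem~\ref{thm:autsgcharbyfellow}) to show that all relations holding in $S$ are Malcev consequences --- group-embeddability entering through cancellation in an ambient group --- of a finite set of relations of bounded size; finite presentability of the universal group then falls out as a corollary of the theorem rather than serving as an ingredient of its proof.
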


(Although \cite{crr_ssgofg} only contains the proof for
right-au\-to\-mat\-ic semigroups, the proof for left-au\-to\-mat\-ic semigroups is
almost identical.)

Since every finitely generated subsemigroup of a
Baumslag--Solitar semigroup $\BSS(m,n)$ with $m \neq n$ is either
right- or left-au\-to\-mat\-ic (Theorems~\ref{thm:one} and~\ref{thm:two}), and since every right- or left-au\-to\-mat\-ic group-em\-bed\-dable semigroup
admits a finite Malcev presentation
(Theorem~\ref{thm:autagfinmalpres}), the following
result obtains:

\begin{corollary}
\label{corol:bsmalcoh}
The Baumslag--Solitar semigroup $\BSS(m,n)$, where $m \neq n$, is
Malcev coherent.
\end{corollary}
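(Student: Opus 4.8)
The plan is to verify the definition of Malcev coherence directly, reducing everything to the three main results already in hand. By Definition~\ref{def:malpres}, to conclude that $\BSS(m,n)$ is Malcev coherent I must show that \emph{every} finitely generated subsemigroup of $\BSS(m,n)$ admits a finite Malcev presentation. So I would begin by fixing an arbitrary finitely generated subsemigroup $S$ of $\BSS(m,n)$, where $m \neq n$, and aim to produce a finite Malcev presentation for it.

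First I would confirm that $S$ is group-em\-bed\-dable, since this is the hypothesis needed to invoke Theorem~\ref{thm:autagfinmalpres}. This follows immediately from the remark in the introduction: by Adyan's Theorem the semigroup $\BSS(m,n)$ embeds in the group $\BSG(m,n)$, and hence so does its subsemigroup $S$. Thus $S$ lies within the scope of Theorem~\ref{thm:autagfinmalpres}.

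Next I would split into the two cases $m > n$ and $m < n$. If $m > n$, then Theorem~\ref{thm:one} shows that $S$ is (right\nobreakdash-)au\-to\-mat\-ic; if $m < n$, then Theorem~\ref{thm:two} shows that $S$ is left-au\-to\-mat\-ic. In either case $S$ is a right- or left-au\-to\-mat\-ic group-em\-bed\-dable semigroup, so Theorem~\ref{thm:autagfinmalpres} applies and yields a finite Malcev presentation for $S$.

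Since $S$ was an arbitrary finitely generated subsemigroup, this establishes that every such subsemigroup admits a finite Malcev presentation, which is exactly the assertion that $\BSS(m,n)$ is Malcev coherent. There is no genuine obstacle at this stage: all the difficulty has already been discharged in Theorems~\ref{thm:one} and~\ref{thm:two}, and the corollary is a routine splicing of those results with Theorem~\ref{thm:autagfinmalpres}. The only point that warrants a moment's attention is the group-em\-bed\-dability of $S$, needed so that Theorem~\ref{thm:autagfinmalpres} may be invoked, and this is secured at once by the embedding $\BSS(m,n) \hookrightarrow \BSG(m,n)$.
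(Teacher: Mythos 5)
Your proposal is correct and follows exactly the paper's own argument: combine Theorems~\ref{thm:one} and~\ref{thm:two} with Theorem~\ref{thm:autagfinmalpres} to get a finite Malcev presentation for each finitely generated subsemigroup. Your explicit check of group-embeddability via Adyan's Theorem is a point the paper leaves implicit (it is noted in the introduction), but otherwise the two arguments coincide.
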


This raises the following question:

\begin{problem}
\label{prob:bssmcoh}
Are the Baumslag--Solitar semigroups $\BSS(m,m)$ (where $m \geq 2$)
Malcev coherent?
\end{problem}

[Notice that $\BSS(1,1)$ is abelian and so Malcev coherent by
  R\'{e}dei's theorem \cite{redei_commutative}.]  The author
conjectures that Open Problem~\ref{prob:bssmcoh} has a positive
answer. However, the conclusions of Section~\ref{sec:bssmmcontnonaut}
rule out a proof using automatic structures.

The following question naturally arises from the coherence of
Baumslag--Solitar groups \cite{ar:kropholler1990a}:

\begin{problem}
\label{prob:bsgmcoh}
Are Baumslag--Solitar groups Malcev coherent?
\end{problem}

Baumslag \cite[Section~B]{baumslag_problems} asks whether all
one-relator groups are coherent. Some progress has been made on this
front \cite{ar:karrass1970a,ar:mccammond2005a}. It is therefore
natural, although perhaps precipitate, to pose the following question:

\begin{problem}
\label{prob:onerelmcoh}
Are all one-relator groups Malcev coherent?
\end{problem}

A restricted version of this question that may be easier to answer is
the following:

\begin{problem}
\label{prob:onerelcancsgmcoh}
Are all one-relation cancellative semigroups Malcev coherent? (Adyan's
Theorem \cite{adjan_embeddability} ensures that one-relation
cancellative semigroups are group-em\-bed\-dable.)
\end{problem}


\newcommand{\etalchar}[1]{$^{#1}$}

\end{document}